\documentclass[a4paper,10pt]{article} \usepackage[latin1]{inputenc}
\usepackage{latexsym,amsmath, amscd, amsthm, amssymb, amsfonts} \usepackage{xy}
\usepackage[dvips]{graphicx}
\theoremstyle{plain}
\usepackage{multirow,array} 
\newtheorem{theorem}{Theorem}[section]
\newtheorem{remark}[theorem]{Remark}
\newtheorem{lemma}[theorem]{Lemma}
\newtheorem{proposition}[theorem]{Proposition}

\theoremstyle{definition}
\newtheorem{example}[theorem]{Example}
\DeclareMathOperator{\ini}{in}

\DeclareMathOperator{\nf}{Nf}
\DeclareMathOperator{\Undone}{Undone}
\DeclareMathOperator{\Done}{Done}

\title{Vector space bases associated
 to vanishing ideals of points}
\author{Samuel Lundqvist}
\date{}
\begin{document}
\maketitle

\begin{abstract}
In this paper we discuss four different constructions of vector space bases associated to vanishing ideals of points. We show
how to compute normal forms with respect to these bases and give new complexity bounds. As an application, we drastically improve the computational algebra approach to the reverse engineering of gene regulatory networks.
\end{abstract}

\section{Introduction}
Let $\Bbbk[x_1, \ldots, x_n]$ be the polynomial ring in $n$ variables over a field $\Bbbk$. 
The vanishing ideal
with respect to a set of points $\{p_1, \ldots, p_m\}$ in $\Bbbk^n$ is defined as the set of elements in $\Bbbk[x_1, \ldots, x_n]$ 
that are zero on all of the $p_i$'s. 

The main tool that is used to compute vanishing ideals of points is the Buchberger-M\"oller algorithm, described in 
\cite{BM}. The Buchberger-M\"oller algorithm returns a Gr\"{o}bner basis for the ideal vanishing on the set $\{p_1,\ldots, p_m\}$. 
A complementary result of the algorithm is a vector space basis for the quotient ring $\Bbbk[x_1, \ldots,
x_n]/I$. However, in many applications it turns out that it is the vector
space basis, rather than the Gr\"{o}bner basis of the ideal, which is of
interest. For instance, it may be preferable to compute normal forms using vector space methods instead of
Gr\"{o}bner basis techniques.

A new bound for the arithmetic complexity of the Buchberger-M\"oller algorithm is given in \cite{lundqvist}, and is equal to
$O(nm^2 + \min(m,n)m^3)$. We will discuss four constructions of
vector space bases, all of which perform better than the Buchberger-M\"oller algorithm. 
An application of the constructions will be that we can improve the method of the reverse engineering 
of gene regulatory networks given in \cite{LaubStigler04}.

A key result for the effectiveness of our methods is a fast
combinatorial algorithm which gives useful structure information about
the relation between the points. The algorithm uses less than $nm+m^2$ arithmetic
comparisons over $\Bbbk$.

As a direct consequence of the combinatorial algorithm, we obtain with
the same complexity a family of separators, that is, a family $\{f_1,
\ldots, f_m\}$ of polynomials such that $f_i(p_i) = 1$
and $f_i(p_j) = 0$ if $i \neq j$. It is easy to see that the
separators form a $\Bbbk$-basis for the quotient ring $\Bbbk[x_1, \ldots, x_n]/I$.
This will be the first construction of vector space bases.

The second construction is a $\Bbbk$-basis formed by the residues of
$1,f, \ldots, f^{m-1}$, where $f$ is a linear form. 
If
$\Bbbk$ is equipped with a total order, this construction 
uses $O(nm + \min(m,n)m^2 \log(m))$ arithmetic operations Also, we obtain an algebra isomorphism $S/I \cong \Bbbk[x]/J$, where
$J$ is a principal ideal.

The two remaining constructions give monomial $\Bbbk$-bases. The
third construction we discuss is a method that was introduced in \cite{CerliencoMureddu} and
improved in \cite{Felszeghy}. It produces
the set of monomials outside the initial ideal of $I$ with respect to the lexicographical ordering, using 
only combinatorial methods. We reanalyze the number of arithmetic operations needed 
in the method presented in \cite{Felszeghy} and we show that it uses only 
$O(nm+m\min(m,nr))$ arithmetic comparisons (the variable $r$ is less than $m$ and will be defined later).

The fourth construction gives a $\Bbbk$-basis which is the complement of
the initial ideal with respect to a class of admissible monomial
orders in a total of $O(nm)$ arithmetic comparisons and additionally $O(\min(m,n)m^3)$ 
arithmetic operations. 

In Section \ref{sec:bio}, we end up by giving the biological implications 
of our constructions.

\section{Notation and preliminaries} \label{sec:notation} Throughout
the paper, let $S=\Bbbk[x_1, \ldots, x_n]$ denote the
polynomial ring in $n$ variables over a field $\Bbbk$ and let $I$ denote an ideal in $S$. Whenever $f \in S$, $|f|$ will denote the
degree of $f$ and $[f]$ will denote the equivalence class in
$S/I$ containing $f$.

Let $B$ be any subset of $S$ such that $[B] = \{[b], b \in B\}$ is a
vector space basis for $S/I$. If $s$ is an element in $S$, its
residue can be uniquely expressed as a linear combination of the
elements in $[B]$, say $[s] = \sum c_i [b_i]$. The $S$-element $\sum
c_i b_i$ is then called the normal form of $s$ with respect to $B$ which
we write as $\nf(s,B) = \sum c_i b_i$. By abuse of notation we say that
$B$ (rather than $[B]$) is a basis for $S/I$.

Let $p$ be a point in $\Bbbk^n$ and $f$ an element of $S$. We denote
by $f(p)$ the evaluation of $f$ at $p$. When $P= \{p_1, \ldots,
p_m\}$ is a set of points, we write $f(P) = (f(p_1), \ldots, f(p_m))$. If $F =
\{f_1, \ldots, f_s\}$ is a set of elements in $S$, then $F(P)$ is
defined to be the $(s \times m)$-matrix whose $i$'th row is $f_i(P)$.

The vanishing ideal $I(P)$ is the ideal consisting of all elements in
$S$ which vanish on all of the points in $P$. 
Given two elements $f_1$ and $f_2$ in $S$ such that $[f_1] = [f_2]$ in $S/I(P)$, we have $f_1(p) =
f_2(p)$ for all $p \in P$. An important property of a set $[B]$ of $m$
elements being a $\Bbbk$-basis for $S/I(P)$ is that $B(P)$ has rank $m$.

A family of separators for a set of distinct points is a set $\{f_1,
\ldots, f_m\}$ of polynomials such that $f_i(p_i) = 1$ and $f_i(p_j) =
0$ whenever $i \neq j$. The residues of a family of separators will
always form a $\Bbbk$-basis for $S/I$. Such a $\Bbbk$-basis will have a nice 
property for computing normal forms and we have the formula
$\nf(f,Sep) = f(p_1) \cdot f_1 + \cdots + f(p_m) \cdot f_m$.

An admissible monomial order is a total order on the monomials in $S$ which also is a well ordering. 
Let $\prec$ be an admissible monomial order. The initial ideal of
$I$, denoted by $\ini(I)$, is the monomial ideal consisting of all
leading monomials of $I$ with respect to $\prec$. One of the
characterizations of a set $G$ being a Gr\"{o}bner basis of an ideal $I$
with respect to an admissible monomial order $\prec$ is that $G \subseteq I$ and
that the leading terms of $G$ generate $\ini(I)$. An old theorem by
Macaulay states that the residues of the monomials outside $\ini(I)$
form a $\Bbbk$-basis for the quotient ring $S/I$. The set of monomials
outside $\ini(I)$ will be called the \emph{standard monomials} (with respect to $\prec$) throughout the paper.

We will measure the performance of the algorithms presented by means
of the number of arithmetic comparisons and the number of arithmetic
operations (addition and multiplication). We will assume that the
cost of an arithmetic comparison is bounded by the cost of an arithmetic
operation. 

Some of the algorithms we present use comparisons and incrementions on the set $\{1, \ldots, m\}$. We call these 
operations \emph{elementary integer operations}. 
The elementary integer operations will in general be neglectable and we will, except for some cases, omit them in the complexity analysis. 

In the sequel, the word "operations" will mean arithmetic operations and
the word "comparisons" will mean arithmetic comparisons, if not stated otherwise. By "bound" we will
always mean an upper bound. 

We do not deal with the growth of coefficients in the
operations, but refer the reader to \cite{FGLM}. In
\cite{Abbottetal}, the techniques in the case when $\Bbbk = \mathbb{Q}$
are discussed, using the Chinese remainder theorem.

\section{Normal form computations for zero dimensional ideals}
\label{sec:nf}

The most frequent method for computing normal forms with respect to an ideal $I$ uses the Noetherian
property of the reduction process with respect to a Gr\"{o}bner basis of
the ideal $I$. However, if $I$ is a vanishing ideal, 
linear algebra techniques are to prefer. Indeed, the reduction process
with respect to a Gr\"{o}bner basis can have exponential runtime, while
the linear algebra techniques have low polynomial runtime. On the other hand, the
linear algebra techniques do not seem to be widely spread and we will 
describe them here. 

\begin{lemma} \label{lemma:sepfrombasis}

 Let $B = \{e_1, \ldots, e_m\}$ and suppose that $[B]$ is a basis for
 $S/I(P)$. Let
$$(f_1, \ldots, f_m)^t = B(P)^{-1} (e_1, \ldots, e_m)^t.$$
Then $\{f_1, \ldots, f_m\}$ is a family of separators.
\end{lemma}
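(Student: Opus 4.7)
The plan is to show that the matrix whose $i$-th row is $f_i(P)$ is the identity matrix; reading off the entries then gives exactly the defining relations $f_i(p_i)=1$ and $f_i(p_j)=0$ for $i \ne j$.

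First, I would observe that $B(P)$ is invertible, so the definition of the $f_i$ makes sense: since $[B]$ is a $\Bbbk$-basis for $S/I(P)$, the preliminaries tell us that $B(P)$ has rank $m$, hence is an invertible $m\times m$ matrix over $\Bbbk$. Write $C = B(P)^{-1}$, so that $f_i = \sum_{j} C_{ij}\, e_j$ for each $i$.

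Next, I would use that evaluation at a point is $\Bbbk$-linear. For any $k$,
$$
f_i(p_k) \;=\; \sum_{j} C_{ij}\, e_j(p_k) \;=\; \sum_{j} C_{ij}\, B(P)_{jk},
$$
since by definition the $(j,k)$-entry of $B(P)$ is $e_j(p_k)$. Stacking the rows $f_i(P)$ into a matrix $F(P)$ whose $i$-th row is $f_i(P)$, the displayed identity is precisely
$$
F(P) \;=\; C \cdot B(P) \;=\; B(P)^{-1} B(P) \;=\; I_m.
$$

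Reading off the $(i,k)$-entry of $I_m$ yields $f_i(p_i)=1$ and $f_i(p_k)=0$ for $k\ne i$, which is exactly the separator property. There is no real obstacle here beyond bookkeeping: the lemma is essentially the assertion that inverting the evaluation matrix of a basis computes the dual basis with respect to the evaluation pairing, and the only point to be careful about is matching the convention that the $j$-th row of $B(P)$ is $e_j(P)$ so that the products compose in the correct order.
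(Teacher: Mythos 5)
Your proof is correct and is essentially the same argument as the paper's: both reduce the claim to the identity $B(P)^{-1}B(P)=I_m$, the only difference being that the paper evaluates the column vector $(f_1,\ldots,f_m)^t$ at each point $p_i$ separately (producing the $i$-th standard unit vector), while you assemble all evaluations into the single matrix equation $F(P)=B(P)^{-1}B(P)$. Your additional remark that $B(P)$ is invertible because $[B]$ is a basis is a useful piece of bookkeeping that the paper leaves implicit.
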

\begin{proof}
$$B(P)^{-1} (e_1, \ldots, e_m)^t(p_i) = B(P)^{-1} (e_1(p_i), \ldots, e_m(p_i))^t$$ 
$$= 
B(P)^{-1} B(P) (\underbrace{0,\ldots,0}_{ i-1 \text{ times}},1,0,\ldots,0)^t = (\underbrace{0,\ldots,0}_{ i-1 \text{ times}},1,0,\ldots,0)^t.$$
\end{proof}

\begin{lemma} \label{lemma:nfwrtbasis}
Suppose that $[B]$ is a basis for
 $S/I(P)$. Then
$$Nf(f,B) = ([e_1], \ldots, [e_m]) (B(P)^{-1})^t (f(p_1), \ldots, f(p_m))^t.$$
\end{lemma}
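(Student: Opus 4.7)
The plan is to reduce the statement to Lemma~\ref{lemma:sepfrombasis} combined with the separator formula $\nf(f, Sep) = \sum_{i=1}^m f(p_i) f_i$ stated in the preliminaries. Let $\{f_1,\ldots,f_m\}$ be the family of separators produced by Lemma~\ref{lemma:sepfrombasis}, so that
$$(f_1,\ldots,f_m)^t = B(P)^{-1}(e_1,\ldots,e_m)^t.$$
Transposing this relation gives the row vector identity $(f_1,\ldots,f_m) = (e_1,\ldots,e_m)(B(P)^{-1})^t$. I would then write the separator formula in matrix form as $\nf(f,Sep) = (f_1,\ldots,f_m)(f(p_1),\ldots,f(p_m))^t$ and substitute the above expression for $(f_1,\ldots,f_m)$ to obtain exactly the right-hand side of the claimed identity.

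The remaining point to justify is that this quantity equals $\nf(f,B)$ and not merely $\nf(f,Sep)$. After substitution, the right-hand side is visibly a $\Bbbk$-linear combination of $e_1,\ldots,e_m$ (the matrix $(B(P)^{-1})^t(f(p_1),\ldots,f(p_m))^t$ is a column of scalars). Its class modulo $I(P)$ equals $[f]$, since it agrees with $\nf(f,Sep)$ in $S/I(P)$. But an element of $S/I(P)$ has a unique expansion in the basis $[B]$, so this linear combination of the $e_i$ must coincide with $\nf(f,B)$ by definition.

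As an alternative, or a sanity check, one could argue directly: write $\nf(f,B) = \sum_i c_i e_i$, evaluate at each $p_j$, and use that $[\nf(f,B)] = [f]$ in $S/I(P)$ to obtain the linear system $B(P)^t \mathbf{c} = (f(p_1),\ldots,f(p_m))^t$; invertibility of $B(P)$ (a consequence of $[B]$ being a basis, as noted in Section~\ref{sec:notation}) then gives $\mathbf{c} = (B(P)^{-1})^t(f(p_1),\ldots,f(p_m))^t$, and multiplying on the left by $(e_1,\ldots,e_m)$ recovers the formula. No step is really an obstacle; the only care needed is to track transposes correctly, keeping in mind that the $(i,j)$-entry of $B(P)$ is $e_i(p_j)$ so that evaluations at points correspond to multiplication by $B(P)^t$ rather than $B(P)$.
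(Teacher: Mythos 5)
Your proposal is correct and follows essentially the same route as the paper: apply the separator normal form formula $\nf(f,Sep) = (f_1,\ldots,f_m)(f(p_1),\ldots,f(p_m))^t$ and substitute $(f_1,\ldots,f_m) = (e_1,\ldots,e_m)(B(P)^{-1})^t$ from Lemma~\ref{lemma:sepfrombasis}. Your extra remark that the resulting expression, being a linear combination of the $e_i$ representing $[f]$, must equal $\nf(f,B)$ by uniqueness of basis expansion is a small justification the paper leaves implicit, but it does not change the approach.
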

\begin{proof}
 We have $\nf(f,Sep) = (f_1, \ldots, f_m) (f(p_1),\ldots,
 f(p_m))^t$ and by Lemma \ref{lemma:sepfrombasis},
$(f_1, \ldots, f_m) = (e_1, \ldots, e_m) (B^{-1})^t.$
\end{proof}
Since evaluation of a monomial of
degree $d$ at a point $p$ is done using $d$ multiplications, the
complexity of evaluating $f$ at the $m$ points uses $O(|f|sm)$
operations, where $s$ is the number of monomials in $f$.
Multiplication by $B(P)$ requires an additional number of
$O(m^2)$ operations, so we have proven the following proposition.
\begin{proposition} \label{prop:normalformPoints} Let $B = \{e_1,
 \ldots, e_m\}$ and suppose that $[B]$ is a basis for $S/I$, where
 $I$ is the vanishing ideal with respect to the points $p_1, \ldots,
 p_m$. Suppose that $B(P)$ and the inverse of $B(P)$ have been
 computed. Then we have a normal form algorithm with respect to the
 separators which runs in $O(|f|sm)$ operations, where $f$
 is a polynomial with $s$ monomials. To compute the normal form with
 respect to the basis $B$, we need to perform $O(|f|sm + m^2)$ 
 operations.
\end{proposition}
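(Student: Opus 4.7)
The statement follows from Lemma \ref{lemma:nfwrtbasis} together with a careful bookkeeping of operation counts, so my plan is essentially to assemble the ingredients already stated and count.

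First I would bound the cost of computing the evaluation vector $f(P)=(f(p_1),\ldots,f(p_m))$. A single monomial of degree $d$ can be evaluated at a point using at most $d$ multiplications (multiplying the relevant coordinates in succession), hence at most $|f|$ multiplications per monomial. Summing the contributions of the $s$ monomials of $f$ costs $O(s)$ additions, so each $f(p_j)$ costs $O(|f|s)$ operations, and the full vector $f(P)$ costs $O(|f|sm)$.

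Next, for the normal form with respect to the separators, I would invoke the explicit formula $\nf(f,Sep)=f(p_1)f_1+\cdots+f(p_m)f_m$ recorded in Section \ref{sec:notation}. The coefficients of $\nf(f,Sep)$ in the separator basis $\{f_1,\ldots,f_m\}$ are \emph{exactly} the entries of $f(P)$, so no further arithmetic is needed once $f(P)$ is in hand, which gives the $O(|f|sm)$ bound.

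For the normal form with respect to $B$, I would apply Lemma \ref{lemma:nfwrtbasis}, which tells us that the coordinate vector of $\nf(f,B)$ in the basis $\{[e_1],\ldots,[e_m]\}$ is obtained as $(B(P)^{-1})^t f(P)^t$. Since $B(P)^{-1}$ is assumed precomputed, this is one multiplication of an $m\times m$ matrix by a vector of length $m$, which costs $O(m^2)$ operations. Adding this to the $O(|f|sm)$ evaluation cost yields the claimed $O(|f|sm+m^2)$ total.

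There is no real obstacle here; the only point that requires mild care is making sure that $|f|$ uniformly bounds the degree of every monomial appearing in $f$, so that the per-monomial evaluation cost is indeed at most $|f|$ multiplications and the matrix-vector product indexing lines up correctly with the transpose appearing in Lemma \ref{lemma:nfwrtbasis}.
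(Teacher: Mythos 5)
Your proposal is correct and matches the paper's argument: the paper likewise bounds the evaluation of $f$ at the $m$ points by $O(|f|sm)$ operations (each monomial of degree at most $|f|$ costing that many multiplications), reads off the separator coefficients directly from $f(P)$, and charges an extra $O(m^2)$ for the multiplication by the precomputed $(B(P)^{-1})^t$ via Lemma \ref{lemma:nfwrtbasis}. No substantive difference.
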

We can also use the theory of multiplication matrices, described in \cite{corless} for instance, to compute normal forms
of vanishing ideals of points with the same complexity as above. The theory of multiplication
matrices also allows to extend normal form algorithms to general rings $S/I$, where
$\dim_k(S/I) < \infty$, see for instance the Mathphi-algorithm in \cite{FGLM}.

\section{Combinatorial preprocessing of the points} \label{sec:witness}
All of the constructions in Section \ref{sec:constructions} rely on a combinatorial preprocessing of the points that we will described here. We will give the notations and the results here, but refer the reader to the Appendix for the algorithmic study.


Let $\Omega$ be a set equipped with an equivalence relation, denoted by $=$. The
 equivalence relation on $\Omega$ is extended to $n$-tuples of
 elements in $\Omega$ by $a = (a_1, \ldots, a_n) = (b_1, \ldots, b_n) = b$ if
 $a_i = b_i$, for all $i$. The \emph{witness} of two different n-tuples $a$ and $b$ is the least $i$ such that
$a_i \neq b_i$. When $a=b$, the witness is zero.
 Let $\pi_i$ be the projection map from $\Omega^n$ to $\Omega^i$ given by 
 $(a_1, \ldots, a_n) \mapsto (a_1, \ldots, a_i)$. 
 Let $v_1, \ldots, v_m$ be $n$-tuples of elements in $\Omega$ and 
 let $\Sigma_i$ be the set of equivalence classes of 
 $\pi_i(v_1), \ldots, \pi_i(v_m)$ for $i=1, \ldots, n$. 
 To simplify the notation, we will represent an equivalence class of tuples as an
 index set, that is, as a subset of $\{1, \ldots, m\}$ instead of as a
 subset of $\{v_1, \ldots, v_m\}$. Using this notation, we define 
$\Sigma_0 = \{\{1, \ldots, m\}\}$. 

Let $\overline{m}$ be the number of distinct elements in the set
 $\{v_1, \ldots, v_m\}$. Notice that $\overline{m} = |\Sigma_n|$. 
 Let $W$ be the \emph{witness list} - the set of all $i$, $i \in \{ 1, \ldots, n \}$, such that
 $\Sigma_{i-1} \neq \Sigma_{i}$. Notice that $W$ is the set of witnesses. 
 Finally, let $C$ be the \emph{witness matrix} - an upper triangular matrix with elements in 
$W \cup \{0\}$ such that, for $i<j$, the number $c_{ij}$ is the witness of $v_i$ and $v_j$. 

\begin{example} \label{example1}

 In $\Omega = \mathbb{Z}$, let
 $v_1 = (1,2,0,1,1,0,3,5),$
 $v_2 = (1,0,1,1,2,0,3,5),$
 $v_3 = (1,2,0,3,3,1,2,0),$
 $v_4 = (0,0,2,0,4,0,2,0),$
 $v_5 = (0,0,2,1,5,0,2,0)$ and $v_6 =$ $(2,1,3,1,6,0,2,0).$ We will write the
 vectors as columns in the left hand side of the table below. In the right hand side we write the equivalence classes. 
 \begin{displaymath}
\left( \begin{array}{cccccc|lc}
 1 & 1 & 1 & 0 & 0 & 2 & \Sigma_1 = & \{\{1,2,3 \}, \{4,5 \}, \{6 \} \} \\
 2 & 0 & 2 & 0 & 0 & 1 & \Sigma_2 = & \{\{1,3\}, \{2\}, \{4,5\}, \{6\} \} \\
 0 & 1 & 0 & 2 & 2 & 3 & \Sigma_3 = &\{\{1,3\}, \{2\}, \{4,5\}, \{6\} \} \\
 1 & 1 & 3 & 0 & 1 & 1 & \Sigma_4 = &\{\{1 \}, \{3 \}, \{2\}, \{4\}, \{5\}, \{6\} \}\\
 1 & 2 & 3 & 4 & 5 & 6 & \Sigma_5 = &\{\{1 \}, \{3 \}, \{2\}, \{4\}, \{5\}, \{6\} \}\\
 0 & 0 & 1 & 0 & 0 & 0 & \Sigma_6 = &\{\{1 \}, \{3 \}, \{2\}, \{4\}, \{5\}, \{6\} \}\\
 3 & 3 & 2 & 2 & 2 & 2 & \Sigma_7 = &\{\{1 \}, \{3 \}, \{2\}, \{4\}, \{5\}, \{6\} \}\\
 5 & 5 & 0 & 0 & 0 & 0 & \Sigma_8 = &\{\{1 \}, \{3 \}, \{2\}, \{4\}, \{5\}, \{6\} \}
\end{array} \right).
 \end{displaymath}
For instance, $\{1,3\} \in \Sigma_2$ shows that $p_1$ and $p_3$ agree on the first two coordinates. 
We have $\overline{m} = 6$, $W = \{1,2,4\}$ and 
$$C=
\begin{pmatrix}
 0 & 2 & 4 & 1 & 1 & 1 \\
 0 & 0 & 2 & 1 & 1 & 1 \\
 0 & 0 & 0 & 1 & 1 & 1 \\
 0 & 0 & 0 & 0 & 4 & 1 \\
 0 & 0 & 0 & 0 & 0 & 1 \\
 0 & 0 & 0 & 0 & 0 & 0
\end{pmatrix}.
$$
\end{example}

From the $\Sigma_i$'s, we can obtain a tree
representation of the vectors. The vertices are labelled by the elements in the $\Sigma_i$'s and 
there is an edge from a vertex labelled by $\Sigma_{ik} \in \Sigma_i$ to a vertex labelled 
by $\Sigma_{i+1,h} \in \Sigma_{i+1}$ exactly when 
$\Sigma_{i+1,h} \subseteq \Sigma_{i k}$. Such an edge is labelled
by $v_{i+1, j}$, for some $j \in \Sigma_{i+1,h}$ (recall that $v_{i+1,j} = v_{i+1,j'}$ for all $j, j' \in \Sigma_{i+1,h}$). 
In this way, all vectors and paths from the root to the
leaves are in a natural one-to-one correspondence. 
The maximal number of edges from a vertex in the tree is denoted by $r$. 
In Example \ref{example1}, $r=3$ since $\Sigma_0$ has three children. 


\begin{figure}[ht!] \label{fig:trie} \centering
 \includegraphics{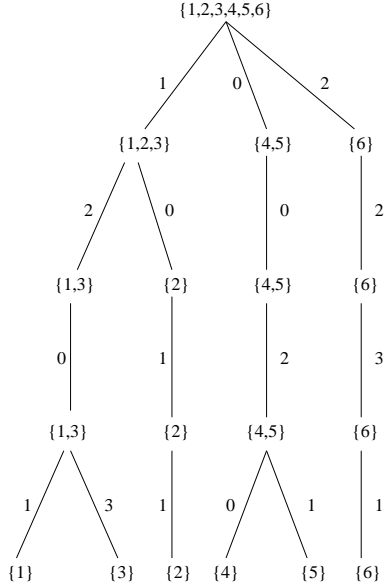}
 \caption{The point trie of the first four coordinates of Example \ref{example1}.}
\end{figure}

Figure 1 
shows how the tree representation of the first four coordinates of the
points from Example \ref{example1} look like. The authors in \cite{Felszeghy} call such a 
tree representation a \emph{trie}. 
The associated trie to a set of points is a key construction in \cite{Felszeghy, Gao}. Since we will 
deal with two different types of tries in this paper, we call the trie that is associated to a set of points 
the \emph{point trie}. 

\begin{theorem} \label{thm:cmpalg}
The $\Sigma_i$'s and/or the associated point trie can be computed using at most 
$nm + m\min(m,rn)$ comparisons. 
\end{theorem}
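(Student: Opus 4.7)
The plan is to describe an iterative partition-refinement algorithm, bound its cost in two different ways, and combine these bounds. Starting from $\Sigma_0 = \{\{1, \ldots, m\}\}$ and processing coordinates $i = 1, \ldots, n$ in order, I would refine each class $K \in \Sigma_{i-1}$ by walking once through its elements while maintaining a list of sub-class representatives: for each new element $j \in K$, compare $v_{i,j}$ sequentially against the current representatives until either a match is found (append $j$ to that sub-class) or the list is exhausted (open a new sub-class with $j$ as representative). This simultaneously yields the $\Sigma_i$ and the children of each node in the point trie.

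The first analytic step is an easy per-class count: if the refinement of $K$ produces $s_K$ sub-classes, then each of the $|K| - 1$ non-initial elements triggers at most $s_K$ comparisons, giving a refinement cost of at most $(|K| - 1) s_K \leq |K| s_K$. Using $s_K \leq r$ and $\sum_K |K| = m$, this bounds the cost of one level by $rm$, and hence the total by $rmn$ comparisons.

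The harder step is to also establish the bound $nm + m^2$, which dominates when $r$ is large relative to $m/n$. For this I would introduce the potential $f_i = \sum_{K \in \Sigma_i} |K|^2$ (with $f_0 = m^2$ and $f_n \geq 0$) and combine the per-class cost with a convexity estimate: if $K$ splits into $s_K$ parts of sizes at least one, then $\sum_k |K_k|^2$ is maximized when one part has size $|K| - s_K + 1$ and the remaining $s_K - 1$ parts are singletons, giving
$$|K|^2 - \sum_k |K_k|^2 \;\geq\; (s_K - 1)(2|K| - s_K) \;\geq\; (s_K - 1)|K|.$$
Writing $|K| s_K = |K|(s_K - 1) + |K|$ then shows that the cost of refining $K$ is bounded by the contribution of $K$ to $f_{i-1} - f_i$ plus $|K|$. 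Summing over $K \in \Sigma_{i-1}$ gives a level cost of at most $(f_{i-1} - f_i) + m$, and telescoping over $i$ yields the total bound $nm + (f_0 - f_n) \leq nm + m^2$.

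To finish, I combine the two bounds by a case split on whether $m \leq rn$ or $m > rn$: in the first case, $nm + m \min(m, rn) = nm + m^2$ and the convexity argument suffices; in the second case, $nm + m \min(m, rn) = nm + mrn \geq rmn$ and the first bound suffices. The main obstacle is really the convexity/telescoping argument that yields the $m^2$ piece; the remaining steps are routine bookkeeping.
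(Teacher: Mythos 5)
Your proof is correct, and at the top level it follows the same strategy as the paper: establish both an $nmr$ bound and an $nm+m^2$ bound for the same set of comparisons, then observe that their minimum is dominated by $nm+m\min(m,rn)$ via the case split on whether $m\le rn$. Where you genuinely diverge is in how the two constituent bounds are obtained. The paper proves the $nm+m^2$ bound for its coordinate-iterating $\Sigma$-algorithm by charging each comparison either to a "split event" (at most $\overline{m}-1$ such events, each costing at most $m$ comparisons) or to a non-splitting pass over a stage ($m$ per stage, $n$ stages); it then imports the $nmr$ bound from the point-trie algorithm of Felszeghy et al.\ and bridges the two via Proposition \ref{prop:equalcmp}, which shows the two algorithms perform literally the same comparisons. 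You instead analyze a single partition-refinement algorithm twice: the $nmr$ bound falls out of $s_K\le r$ summed over classes and levels, and the $nm+m^2$ bound comes from the potential $f_i=\sum_{K\in\Sigma_i}|K|^2$ together with the convexity estimate $|K|^2-\sum_k|K_k|^2\ge(s_K-1)|K|$, which correctly absorbs the $(s_K-1)|K|$ part of the per-class cost $|K|s_K$ into the potential drop. Both arguments are sound; yours is more self-contained (no appeal to the external $nmr$ bound and no need for the comparison-equivalence proposition), while the paper's split-counting is slightly more elementary and its equivalence proposition carries independent interest, since it lets both preprocessing algorithms inherit each other's bounds. One small point worth stating explicitly in your write-up is that $s_K\le r$ because the sub-classes of $K$ are precisely the children of the corresponding trie vertex, which is how $r$ is defined.
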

\begin{proof}
We refer the reader to the Appendix.
\end{proof}

\section{Vector space constructions} \label{sec:constructions}
In this section we present the four different constructions. 

\subsection{Construction 1 - A separator basis} \label{sec:conssep}
Let $I$ be the vanishing ideal of $m$ distinct points $p_1, \ldots, p_m$ and let
$C$ be the witness matrix with respect to $p_1, \ldots, p_m$. Let
\begin{equation} \label{expr:sep} 
Q_i = \prod_{j\ne i}
 \frac{x_{c_{ij}}-p_{j c_{ij}}}{p_{i c_{ij}}-p_{j c_{ij}}}.
\end{equation}
It is easily checked that $Q_i(p_i) = 1$ and $Q_i(p_j) = 0$ if $i \neq
j$. In this way we get closed expressions of a $\Bbbk$-basis for $S/I$ as a
direct consequence of Theorem \ref{thm:cmpalg}. We remark that
(\ref{expr:sep}) is a standard construction of separators. For instance, it is used in \cite{LaubStigler04}, but there, the matrix $C$
is computed in a naive way which uses $O(nm^2)$ comparisons.

\begin{theorem} \label{thm:kbasisfromwitnesses} Let $I$ be the vanishing ideal with respect
to $m$ distinct points. We can compute a set
 of separators and hence a $\Bbbk$-basis for $S/I$ in $O(nm)$ comparisons and
$O(m^2)$ operations over $\Bbbk$. We have also a normal form
 algorithm with respect to this basis. To compute the normal form of an element $f$ with $s$ monomials, 
 this algorithm runs in $O(s|f|m)$ operations over $\Bbbk$.
\end{theorem}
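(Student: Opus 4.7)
The plan is to proceed in three stages matching the three claims of the theorem: (i) $O(nm)$ comparisons to obtain the witness matrix, (ii) $O(m^2)$ further arithmetic operations over $\Bbbk$ to assemble the separator polynomials in factored form, and (iii) $O(s|f|m)$ operations per normal-form query.

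First, I would invoke Theorem \ref{thm:cmpalg} on the point set $P$ to compute the witness matrix $C = (c_{ij})$. The bound stated there is $nm + m\min(m, rn)$ comparisons. The leading $nm$ term gives the comparison count in the theorem; the remaining $m\min(m, rn) \leq m^2$ comparisons can be charged instead to the $O(m^2)$ operations budget, since arithmetic comparisons cost no more than arithmetic operations (Section \ref{sec:notation}).

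Second, with $C$ available, I would define each separator $Q_i$ via the factored expression (\ref{expr:sep}) and store only the data $(c_{ij},\,(p_{i c_{ij}}-p_{j c_{ij}})^{-1})$ for each pair $i \neq j$, rather than expanding the product into a sum of monomials. Computing these $m(m-1)$ denominators takes one subtraction and one division per pair, i.e. $O(m^2)$ operations over $\Bbbk$, and each denominator is nonzero precisely because $c_{ij}$ is the first coordinate on which $p_i$ and $p_j$ disagree. The separator identities $Q_i(p_i)=1$ and $Q_i(p_j)=0$ for $j\neq i$ are then immediate from the definition of a witness, so by the discussion of separators in Section \ref{sec:notation} the residues $[Q_1], \ldots, [Q_m]$ form a $\Bbbk$-basis for $S/I$.

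Third, for the normal-form algorithm I would use the separator formula recalled in Section \ref{sec:notation},
$$\nf(f, Sep) = f(p_1)\,Q_1 + \cdots + f(p_m)\,Q_m,$$
so that the normal form is represented by the coefficient vector $(f(p_1), \ldots, f(p_m))$ in the basis $\{Q_1, \ldots, Q_m\}$. All that is left is to evaluate $f$ at each of the $m$ points: a single monomial of degree at most $|f|$ evaluates in $O(|f|)$ multiplications, so $f$ (with $s$ monomials) evaluates at one point in $O(s|f|)$ operations, and at all $m$ points in $O(s|f|m)$ operations.

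The only subtle point is the first one: one must explicitly observe that the extra $m\min(m, rn) \leq m^2$ comparisons coming from Theorem \ref{thm:cmpalg} are silently absorbed into the $O(m^2)$ operations bound, using the paper's convention that an arithmetic comparison costs no more than an arithmetic operation. Everything else is straightforward manipulation of (\ref{expr:sep}) together with the separator formula for $\nf(f,Sep)$.
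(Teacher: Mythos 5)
Your proposal is correct and follows essentially the same route as the paper: compute the witness matrix via Theorem \ref{thm:cmpalg} (absorbing the $m\min(m,rn)\leq m^2$ extra comparisons into the operations budget, which the paper does implicitly by noting a comparison costs no more than an operation), spend $O(m^2)$ operations on the denominators in (\ref{expr:sep}), and obtain the normal form from $\nf(f,Sep)=\sum_i f(p_i)Q_i$ by evaluating $f$ at the $m$ points, exactly as in Proposition \ref{prop:normalformPoints}.
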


\begin{proof}

 By definition, $Q_1, \ldots, Q_m$ is a set of separators for the
 points and hence a $\Bbbk$-basis for $S/I$. To get the 
 $Q_i$'s is a nice form, we need to evaluate the denominators, for which
 we need $O(m^2)$ operations.
 
 To compute the witness matrix is $O(nm + \min(m,rn))$ comparisons by 
 Theorem \ref{thm:cmpalg}. Since we assume that a comparison is cheaper than an operation, we conclude that 
 we need $O(nm)$ comparisons and $O(m^2)$ operations to compute the $Q_i$'s.

The existence and effectiveness of the normal form algorithm follows from Proposition \ref{prop:normalformPoints}.
\end{proof}

\begin{example}[Example \ref{example1} continued] \label{example2}
 Consider the points $p_1, \ldots, p_6$ as elements in $\mathbb{Z}_{43}$. Using (\ref{expr:sep}), we  determine 
the separator $Q_1$ of $p_1, \ldots, p_6$ as
$$
Q_1 = \frac{x_2}{2-0} \cdot \frac{x_4 - 2}{1-2} \cdot \frac{x_1 -
 0}{1-0} \cdot \frac{x_1-0}{1-0} \cdot \frac{x_1 - 2}{1-2} =
22x_1^2(x_1-2) x_2 (x_4 -2)$$ and, similarly $$
 Q_2 = 32 x_1^2 (x_1-2) (x_2-2)^2, Q_3 = 32 x_1^2(x_1-2)x_2(x_4-1),$$ 
$$Q_4 = 22(x_1 -1)^3 (x_1-2) (x_4-1), Q_5 = 22(x_1 -1)^3 (x_1-2) x_4,$$
$$Q_6 = 11(x_1-1)^3 x_1^2.$$
Thus, $[\{Q_1, \ldots, Q_6\}]$ is a vector space basis for $\mathbb{Z}_{43}[x_1, \ldots, x_8]/I(P)$.
Let $f = x_1 x_2 x_4 + x_4 x_5 x_6x_7$. To
compute the normal form of $f$, we
evaluate the expression on the six points to get $f(P) = (2,0,24,0,0,2)$. Thus
$\nf(f, \{Q_1, \ldots, Q_6\}) = 2Q_1 + 24Q_3 + 2Q_6.$

\end{example}

\begin{remark}
If $Q_i = f_1^{\alpha_1} \cdots f_n^{\alpha_n}$ is a separator, let 
$\overline{Q_i} =  f_1 \cdots f_n$. Then $Q_i(p_j) \neq 0$ only when $i = j$, so
$\overline{Q_i}/\overline{Q_i}(p_i)$ is also a separator. This observation could be used to
compute separators of low degrees. 
\end{remark}

\subsection{Construction 2 - An isomorphism} \label{sec:consiso}

\begin{lemma} \label{lemma:diffpoints} Let $\Omega$ be a set and let
 $v_1, \ldots, v_m$ be $n$-tuples with elements in $\Omega$. Let $\overline{m}$
 denote the number of distinct tuples and, without loss of
 generality, suppose that $v_{1}, \ldots, v_{\overline{m}}$
 are distinct. Let $W = \{i_1, \ldots, i_{\overline{n}} \}$, be the 
 witness list with respect to 
 $v_1, \ldots, v_m$. 
 Let $\pi$ be the projection from
 $\Omega^n$ to $\Omega^{\overline{n}}$, defined by $\pi((a_1, \ldots,
 a_n)) = (a_{i_1}, \ldots, a_{i_{\overline{n}}})$. Let
 $w_i=\pi(v_{i})$. Then $w_1, \ldots, w_{\overline{m}}$ are distinct.
 
\end{lemma}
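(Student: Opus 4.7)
The plan is to show that for any two distinct indices $i,j \in \{1, \ldots, \overline{m}\}$, the tuples $w_i$ and $w_j$ differ in at least one coordinate, by identifying a witness index that lies in $W$ and therefore survives the projection $\pi$.

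First I would fix $i \neq j$ with $1 \le i,j \le \overline{m}$ and let $k$ be the witness of $v_i$ and $v_j$; since these tuples are distinct, $k \ge 1$ and $v_{ik} \neq v_{jk}$, while $v_{il} = v_{jl}$ for all $l < k$. The key step is to show $k \in W$. Indeed, the equality $v_{il} = v_{jl}$ for $l < k$ means that $i$ and $j$ lie in the same element of $\Sigma_{k-1}$, whereas $v_{ik} \neq v_{jk}$ forces $i$ and $j$ into different elements of $\Sigma_k$. Hence at least one class of $\Sigma_{k-1}$ is split when we refine by the $k$-th coordinate, so $\Sigma_{k-1} \neq \Sigma_k$ and therefore $k \in W$ by definition of the witness list.

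Once $k \in W$ is established, $k$ appears as some $i_s$ in the enumeration $W = \{i_1, \ldots, i_{\overline{n}}\}$, so the $s$-th entry of $\pi(v_i)$ is $v_{ik}$ and the $s$-th entry of $\pi(v_j)$ is $v_{jk}$. Since these differ, we conclude $w_i \neq w_j$, completing the argument.

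I do not anticipate a serious obstacle here; the lemma is essentially a tautology once the definitions are unwound. The only subtle point is the converse-like implication $\Sigma_{k-1} \neq \Sigma_k$, which one should argue carefully by noting that $i$ and $j$ served as witnesses to the splitting of their common class in $\Sigma_{k-1}$, so this step deserves to be stated explicitly rather than left as "clear."
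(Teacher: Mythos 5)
Your proof is correct and follows essentially the same route as the paper's: take the witness (least differing coordinate) of $v_i$ and $v_j$ and observe that it lies in $W$, hence survives the projection. The paper simply invokes its earlier remark that $W$ is exactly the set of witnesses, whereas you spell out why ($i$ and $j$ share a class in $\Sigma_{k-1}$ but not in $\Sigma_k$, so $\Sigma_{k-1}\neq\Sigma_k$), which is a reasonable bit of added care but not a different argument.
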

\begin{proof}
 Suppose that $j\neq k$ and $j,k \leq \overline{m}$. Since $v_j$ and $v_k$
 are distinct, there is a witness $i_h \in W$ such that $v_{ji_h}
 \neq v_{ki_h}$. By definition, this means that $w_j$ and
 $w_k$ differ at position $h$.
\end{proof}

\begin{lemma} \label{lemma:iso}
Let $\{p_1, \ldots, p_m\}$ be a set of distinct points in $\Bbbk^n$. Let $I$ be the vanishing ideal with respect to these points.
Let $\overline{n}$ be any positive integer and 
let $\pi$ be an algebraic map from $\Bbbk^n$ to $\Bbbk^{\overline{n}}$ such that $\pi(p_1), \ldots, \pi(p_m)$ are distinct.
Let $T = \Bbbk[y_{i_1}, \ldots y_{i_{\overline{n}}}]$ and let $J$ be the vanishing ideal with respect to $\pi(p_1), \ldots, \pi(p_m).$
Then $S/I \cong T/J$ are isomorphic as algebras.
\end{lemma}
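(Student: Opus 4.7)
The plan is to exhibit an explicit $\Bbbk$-algebra homomorphism $\varphi : T \to S/I$, show that it factors through $J$, and then argue that the induced map $\overline{\varphi} : T/J \to S/I$ is an isomorphism by proving injectivity and matching dimensions.

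First I would write the algebraic map $\pi$ in coordinates: there exist polynomials $\pi^{(1)}, \ldots, \pi^{(\overline{n})} \in S$ such that $\pi(q) = (\pi^{(1)}(q), \ldots, \pi^{(\overline{n})}(q))$ for all $q \in \Bbbk^n$. Define $\varphi : T \to S/I$ as the $\Bbbk$-algebra homomorphism determined by $y_{i_j} \mapsto [\pi^{(j)}]$. For any $g \in T$, we then have $\varphi(g) = [g(\pi^{(1)}, \ldots, \pi^{(\overline{n})})]$, and evaluating this representative at $p_i$ gives $g(\pi(p_i))$.

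Next I would check that $J \subseteq \ker \varphi$: if $g \in J$, then by definition $g(\pi(p_i)) = 0$ for every $i$, so the polynomial $g(\pi^{(1)}, \ldots, \pi^{(\overline{n})})$ vanishes on all of $p_1, \ldots, p_m$, hence lies in $I$. Therefore $\varphi$ descends to a $\Bbbk$-algebra homomorphism $\overline{\varphi} : T/J \to S/I$. For injectivity, I would reverse the same implication: if $[g] \in \ker \overline{\varphi}$, then $g(\pi^{(1)}, \ldots, \pi^{(\overline{n})}) \in I$, so it vanishes on each $p_i$, which means $g(\pi(p_i)) = 0$ for every $i$, and hence $g \in J$.

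Finally, for surjectivity I would use a dimension count: since the $p_i$ are $m$ distinct points of $\Bbbk^n$, we have $\dim_\Bbbk S/I = m$, and since by hypothesis $\pi(p_1), \ldots, \pi(p_m)$ are also $m$ distinct points of $\Bbbk^{\overline{n}}$, we have $\dim_\Bbbk T/J = m$ as well. An injective $\Bbbk$-linear map between finite-dimensional spaces of equal dimension is surjective, so $\overline{\varphi}$ is the desired algebra isomorphism. There is no real obstacle here; the only point that needs care is to remember that $\pi$ being an \emph{algebraic} map is exactly what makes $\varphi$ well-defined as a polynomial substitution, and that the distinctness hypothesis on $\pi(p_1), \ldots, \pi(p_m)$ is used solely to match the two dimensions.
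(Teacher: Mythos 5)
Your proposal is correct and follows essentially the same route as the paper: both define the pullback homomorphism induced by $\pi$ (you write it in coordinates as substitution of the $\pi^{(j)}$, the paper writes it as $\pi^*(f)(p)=f(\pi(p))$), both observe that $g\in J$ if and only if its pullback lies in $I$ to get a well-defined injective map $T/J\to S/I$, and both conclude surjectivity from the equality $\dim_\Bbbk(T/J)=m=\dim_\Bbbk(S/I)$ guaranteed by the distinctness of the $\pi(p_i)$.
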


\begin{proof}
 Let $\pi^*$ be the corresponding monomorphism from $T$ to $S$ defined by $\pi^*(f)(p) = f(\pi(p))$ for $f \in T$. Notice that $f \in J$ if and only if
$f(\pi(p_i)) = 0$ for all $i$, which is equivalent
to $\pi^*(f)(p_i) = 0$ for all $i$, which in turn holds if and only if $\pi^*(f) \in I$. This allows us to extend $\pi^*$ to a monomorphism from $T/J$ to $S/I$. Since
$\pi(p_1), \ldots, \pi(p_m)$ are all distinct, we have $\dim_{\Bbbk} (T/J) = \dim_{\Bbbk} (S/I)$ and, thus, the extension of $\pi^*$ is an isomorphism of algebras.
\end{proof}

It follows easily that if $B$ is any set such that $[B]$ is a basis for $T/J$, then 
$[\pi^*(B)]$ is a basis for $S/I$.
Let $\{p_1, \ldots, p_m\}$ be a set of distinct points and suppose that we write the points
with respect to the coordinates $(\overline{x_1},
x_2, \ldots, x_n)$, where $\overline{x_1}= g_1x_1 + \cdots + g_nx_n$
and the $g_i$'s are generic. 
Define $\pi: (a_1, \ldots, a_n)
\mapsto a_1$ with respect to these coordinates. Then the points
$\pi(p_1), \ldots, \pi(p_n)$ will be distinct. In fact, we can
replace the generic coefficients with elements in $\Bbbk$, provided
that $\Bbbk$ is large enough. We give two constructions based on this
observation.

\begin{proposition} \label{theorem:nonlinearkbasisiso} 
Let $\{p_1,\ldots, p_m\}$ be a set of distinct points in $\Bbbk^n$ and suppose that
 $\Bbbk$ contains at least $m$ elements. Let $I$ be the vanishing
 ideal with respect to these points. Then there is an algebraic map
 $\pi$ from $\Bbbk^n$ to $\Bbbk$ such that $\pi(p_1), \ldots,
 \pi(p_m)$ are distinct and $S/I \cong \Bbbk[x]/J$, where $J$ vanishes 
	on $\pi(p_1), \ldots, \pi(p_m)$.
\end{proposition}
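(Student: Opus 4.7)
The plan is to reduce the proposition to Lemma \ref{lemma:iso} by producing an algebraic map $\pi\colon \Bbbk^n \to \Bbbk$ sending the $p_i$'s to $m$ distinct values in $\Bbbk$. Once such a $\pi$ is in hand, Lemma \ref{lemma:iso} applied with $\overline{n}=1$ immediately yields the algebra isomorphism $S/I \cong \Bbbk[x]/J$, where $J$ is the vanishing ideal of $\pi(p_1), \ldots, \pi(p_m)$ in $\Bbbk[x]$.

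To build $\pi$, I would use interpolation via the separators from Construction 1, rather than a linear form (which is why this construction is labelled nonlinear). Since the $p_i$'s are distinct, Theorem \ref{thm:kbasisfromwitnesses} (equivalently, the explicit formula (\ref{expr:sep})) provides polynomials $Q_1, \ldots, Q_m \in S$ with $Q_i(p_j) = \delta_{ij}$. The hypothesis $|\Bbbk| \geq m$ lets me pick $m$ pairwise distinct elements $\alpha_1, \ldots, \alpha_m \in \Bbbk$. I would then set $\pi = \sum_{i=1}^m \alpha_i Q_i \in S$, which, viewed as a polynomial function, is an algebraic map $\Bbbk^n \to \Bbbk$ satisfying $\pi(p_j) = \alpha_j$, so the images are pairwise distinct as required.

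The final step is a direct appeal to Lemma \ref{lemma:iso}, giving $S/I \cong \Bbbk[x]/J$ with $J$ the vanishing ideal of $\alpha_1, \ldots, \alpha_m$. There is no substantive obstacle: the hypothesis $|\Bbbk| \geq m$ is used only to extract the $m$ distinct $\alpha_i$'s, and the nontrivial ingredient — the existence of separators over an arbitrary field — is supplied uniformly by Construction 1, since the witness entries $c_{ij}$ guarantee that every denominator appearing in (\ref{expr:sep}) is nonzero. If anything needs care, it is just to emphasize that $\pi$ need not be linear; this is what allows the bound on $|\Bbbk|$ to be as sharp as $m$, whereas a linear $\pi$ would in general require $\Bbbk$ to avoid up to $\binom{m}{2}$ hyperplanes.
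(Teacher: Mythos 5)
Your proof is correct and coincides with the paper's: both take $m$ distinct scalars, form $\pi = \sum_i c_i Q_i$ from the separators of Construction 1 so that $\pi(p_i) = c_i$, and conclude via Lemma \ref{lemma:iso}. The paper additionally records the resulting basis $[1], [\pi], \ldots, [\pi^{m-1}]$ of $S/I$, but that is beyond what the stated proposition requires.
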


\begin{proof}
 Let $c_1, \ldots, c_m$ be distinct elements in $\Bbbk$. Let $\{Q_1,
 \ldots, Q_m\}$ be a family of separators for the points and let further
$$\pi = \sum_i c_i Q_i.$$
Then $\pi(p_i) = c_i$. Hence, $S/I \cong \Bbbk[x]/J$ by Lemma
\ref{lemma:iso}. 
Now $[1], \ldots, [x^{m-1}]$ forms a $\Bbbk$-basis for $\Bbbk[x]/J$ and
$\pi^{*}(x) = \sum_i c_i Q_i$. It follows that 
$$[1], [\sum_i c_i Q_i], \ldots, [\sum_i c_i Q_i]^{m-1}$$ 
becomes a $\Bbbk$-basis for $S/I$.
\end{proof}

If we assume that $\Bbbk$ contains at least $\binom{m}{2}+1$ elements,
then the map $\pi$ from $\Bbbk^n$ to $\Bbbk$ can be chosen to be a
projection, and the $\Bbbk$-basis will be of the form $[1], [x],
\ldots, [x^{m-1}]$ with $x$ linear.

To settle this, we need to introduce some notation. Consider the point set $\{p_1, \ldots, p_m\}$ in $\Bbbk^n$. Recall that $p_{ik} = p_{jk}$ for all $k \leq h$ if and only if 
 there is a set $\sigma$ in
 $\Sigma_h$ such that $i,j \in \sigma$. 
 We say that a vector $v_h$ in
 $\Bbbk^m$ \emph{realizes} $\Sigma_h$ if $v_{hi}
 = v_{hj}$ if and only if there exists a set $\sigma$ in $\Sigma_h$
 such that $i,j \in \sigma$. For instance, if $m=6$ and $\Sigma_h =
 \{\{1,5\},\{2\}, \{3,6\}, \{4\}\}$, then $(1,2,0,3,1,0)$ realizes
 $\Sigma_h$. When $v_h$ realizes $\Sigma_h$ we say that the
 \emph{type} of $v_h$ is $\Sigma_h$. Notice that if all points are distinct, then 
 $\Sigma_n = \{\{1\}, \ldots, \{m\}\}$.

\begin{lemma} \label{lemma:real}
 Let $P = \{p_1, \ldots, p_m\}$
be a set of distinct points in $\Bbbk^n$. Suppose that $\Bbbk$ contains at
 least $\binom{m}{2}+1$ elements. Then there exists $c_1, \ldots, c_{\overline{n}} \in \Bbbk$ such that all elements in the $m$-vector $c_{1} (p_{1i_1}, \ldots,
 p_{mi_1}) + \cdots + c_{\overline{n}} (p_{1 i_{\overline{n}}},
 \ldots, p_{m i_{\overline{n}-1}})$ are distinct, where $\overline{n} \leq \min(m,n)$ and $1=i_1 < i_2 < \cdots < i_{\overline{n}} \leq n$.
 The $c_i$'s 
 can be computed using 
$O(nm)$ comparisons and $O(\min(m,n)m^2 \log(m))$ operations provided a total order on $\Bbbk$. If $\Bbbk$ is not
ordered, then we need $O(nm)$ comparisons and $O(\min(m,n)m^4)$ operations.
 \end{lemma}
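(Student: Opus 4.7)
The plan is to construct $c_1, \ldots, c_{\overline{n}}$ greedily, one at a time, maintaining the invariant that the partial sum
\[
v^{(k)} = \sum_{\ell=1}^{k} c_\ell \bigl(p_{1,i_\ell}, \ldots, p_{m,i_\ell}\bigr)
\]
realizes the equivalence relation $\Sigma_{i_k}$. Starting from $v^{(0)}=0$, which realizes $\Sigma_0 = \{\{1,\ldots,m\}\}$, if the construction succeeds up through $k=\overline{n}$ then $v^{(\overline{n})}$ realizes $\Sigma_{i_{\overline{n}}} = \Sigma_n$ (the second equality because no coordinate outside the witness list further refines the partition, and the points are distinct), so the $m$ entries of $v^{(\overline{n})}$ are all distinct. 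The bound $\overline{n} \leq \min(m,n)$ is immediate: $\overline{n} \leq n$ by construction, and each witness step strictly refines a partition that goes from one block to $m$ blocks, so $\overline{n} \leq m-1$.

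For the inductive step, write $w_k = (p_{1,i_k}, \ldots, p_{m,i_k})$, so $v^{(k)} = v^{(k-1)} + c_k w_k$. A pair $i \neq j$ lying in a common class of $\Sigma_{i_k}$ automatically satisfies $v^{(k)}_i = v^{(k)}_j$: it already satisfies $v^{(k-1)}_i = v^{(k-1)}_j$ by induction (since $\Sigma_{i_k}$ refines $\Sigma_{i_{k-1}}$) and it also agrees at coordinate $i_k$. For a pair $(i,j)$ in different classes of $\Sigma_{i_k}$, the required inequality $v^{(k)}_i \neq v^{(k)}_j$ is a nontrivial linear condition in $c_k$ that excludes at most one value (namely $c_k = 0$ for pairs newly separated at step $k$, and $c_k = (v^{(k-1)}_j - v^{(k-1)}_i)/(p_{i,i_k} - p_{j,i_k})$ for pairs already separated and disagreeing at coordinate $i_k$). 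Since there are at most $\binom{m}{2}$ such pairs, the hypothesis $|\Bbbk| \geq \binom{m}{2}+1$ furnishes a valid $c_k$.

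For the complexity, Theorem \ref{thm:cmpalg} supplies the witness data in $O(nm)$ comparisons. At each of the $\overline{n} \leq \min(m,n)$ steps, we compute the at most $\binom{m}{2}$ forbidden values from $v^{(k-1)}$ in $O(m^2)$ operations and update $v^{(k)}$ in a further $O(m)$. Choosing a valid $c_k$ accounts for the split: in the ordered setting we sort the forbidden set in $O(m^2 \log m)$ comparisons and pick a value in an adjacent gap, giving $O(\min(m,n)\, m^2 \log m)$ operations in total; without an order, we test the elements of a fixed reference set $T \subset \Bbbk$ of size $\binom{m}{2}+1$ against the forbidden set, costing $O(m^4)$ per step and $O(\min(m,n)\, m^4)$ overall. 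The main obstacle is pinning down the invariant so that the forbidden-value count is bounded by $\binom{m}{2}$ at every step, independently of how $\Sigma_{i_k}$ refines $\Sigma_{i_{k-1}}$; once that is done the algorithmic accounting is routine.
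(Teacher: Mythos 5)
Your proposal is correct and follows essentially the same route as the paper's ``Distinct element algorithm'': greedily choose $c_k$ so that the partial sum realizes $\Sigma_{i_k}$, observing that each pair separated in $\Sigma_{i_k}$ forbids at most one value of $c_k$ (namely $0$ for newly separated pairs and a single ratio for previously separated pairs disagreeing at coordinate $i_k$), so that $\binom{m}{2}+1$ field elements suffice. The only imprecision is the phrase ``pick a value in an adjacent gap'' in the ordered case, since an arbitrary total order on $\Bbbk$ need not allow you to manufacture a field element lying between two forbidden values; the paper instead tests a fixed list of $\binom{m}{2}$ nonzero candidates against the sorted forbidden list by binary search, which yields the same $O(m^2\log m)$ cost per step.
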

\begin{proof} Let $(i_1, \ldots, i_{\overline{n}})$ be the witness list with respect to $p_1, \ldots, p_m$. We will iterate through this list.  The idea is to start with a realization $v^{(h-1)} \in \mathbb{Z}^m$ of $\Sigma_{i_{h-1}}$ and show that 
$v^{(h)} = v^{(h-1)} + c_{h} (p_{1,i_h}, \ldots, p_{m,i_h})$ 
realizes $\Sigma_{i_h}$ for some $c_{h} \in \Bbbk$. The rest of the proof consists of an algorithm which we call the Distinct element algorithm.  
 \vspace{0.2cm}

 \noindent \textbf{Distinct element algorithm}


 \noindent
 At stage $1$, let $v^{(1)} = (p_{11},\ldots, p_{m1})$ and let $c_1 = 1$. \\
 \noindent
 At stage $h$, where $1<h \leq \overline{n}$, suppose that $v^{(h-1)}$ is a
 realization of $\Sigma_{i_{h-1}}$. Let $\Pi$
 be the set of pairs
 $(j,k)$ such that $j$ and $k$ are in different subsets of both
 $\Sigma_{i_{h-1}}$ and the type of $(p_{1 i_h}, \ldots, p_{m i_h})$.
Since $h-1 < \overline{n}$, the number of pairs $(j,k)$
 such that $j$ and $k$ are in different subsets of $\Sigma_{i_{h-1}}$
 is bounded by $\binom{m}{2}-1$. Thus $\Pi = \{(a_1,b_1), \ldots,
 (a_s,b_s)\}$ for some $s \leq \binom{m}{2} - 1$.
 Let
 $$\tau_j = \frac{v^{(h-1)}_{a_j} - v^{(h-1)}_{b_j}}{p_{b_j i_h}- p_{a_j i_h}} 
 \text{ for } j = 1, \ldots, s.$$ 
 Let $c_{h}$ be any non-zero element in $\Bbbk \setminus \{\tau_1,
 \ldots, \tau_s\}$ and let 
$$v^{(h)} = v^{(h-1)} + c_{h}
 (p_{1 i_h}, \ldots, p_{m i_h}).$$
 \noindent
At stage $\overline{n}+1$, we stop the algorithm and return $(c_1, \ldots, c_{\overline{n}})$.\\

\vspace{0.2cm}
\noindent \textbf{The correctness of the algorithm}\\
\noindent
We only need to show that $v^{(h)}$ realizes $\Sigma_{i_{h}}$. For $h=1$ this is clear. Pick
$a_j$ and $b_j$ in the same subset of $\Sigma_{i_{h}}$. Then $a_j$
and $b_j$ also belong to the same element in $\Sigma_{i_{h-1}}$. Hence, $v^{(h-1)}_{a_j} = v^{(h-1)}_{b_j}$ and $p_{a_j i_h} = p_{b_j i_h}$, 
from which it follows that $v^{(h)}_{a_j} = v^{(h)}_{b_j}$.

Suppose that $a_j$ and $b_j$ are in different subsets of
$\Sigma_{i_h}$. Suppose further that $a_j$ and $b_j$ are in the same
subset of $\Sigma_{i_{h-1}}$. Then $v^{(h-1)}_{a_j} = v^{(h-1)}_{b_j}$
and $p_{a_j i_h} \neq p_{b_j i_h}$ so that $v^{(h)}_{a_j} -
v^{(h)}_{b_j} = c_{h} p_{a_j i_h} - c_{h} p_{b_j i_h} \neq 0$. 

Thus,
it remains to check the case when $a_j$ and $b_j$ belong to different
sets in $\Sigma_{i_{h-1}}$. In this case, we have 
$v^{(h)}_{ a_j } - v^{(h)}_{b_j} = v^{(h-1)}_{a_j} - v^{(h-1)}_{b_j} + c_{h} p_{a_j i_h} - c_{h}
p_{b_j i_h}$. 
If $p_{a_j i_h} = p_{b_j i_h}$, then $v^{(h)}_{ a_j } -
v^{(h)}_{ b_j} = v^{(h-1)}_{a_j} - v^{(h-1)}_{b_j} \neq 0$. 
If $p_{a_j i_h} \neq p_{b_j i_h}$, then $v^{(h-1)}_{a_j} -
v^{(h-1)}_{b_j} + c_{h} p_{a_j i_h} - c_{h} p_{b_j i_h} = 0$
exactly when
$$c_h = \frac{v^{(h-1)}_{a_j} - v^{(h-1)}_{b_j}}{p_{b_j i_h}- p_{a_j i_h}}.$$
However, this can never occur as $p_{a_j i_h} \neq p_{b_j
 i_h}$ implies $(a_j,b_j) \in \Pi$ and $c_h$ was chosen to
differ from $\tau_j$. \vspace{0.2cm}

\noindent{\textbf{The complexity of the algorithm}}

\noindent Fix a stage $h>1$. We construct $\Pi$ as follows. Let $L_1$ be a list containing 
all pairs $(j,k)$ where $j$ and $k$ are in different subsets of $\Sigma_{i_{h-1}}$. Let $L_2$ be a list containing all pairs $(j,k)$ where $j$ and $k$ are of different subsets of the type of $(p_{1 i_h}, \ldots, p_{m i_h})$. Merge these lists into a new list $L$. 
The elements in $\Pi$ are exactly the elements which occur twice in $L$. After sorting $L$, we can easily obtain $\Pi$. Note that we 
use $O(m^2 \log(m^2)) = O(m^2 \log(m))$ elementary integer operations for this construction.
 
Constructing the list $(\tau_1, \ldots, \tau_s)$ from $\Pi$ requires $O(m^2)$ 
operations. 
If $\Bbbk$ is ordered, we sort the list using $O(m^2 \log(m))$ comparisons.
To find $c_h$, consider a list of $\binom{m}{2}$ nonzero elements in
$\Bbbk$. Take the first element in this list and check whether it is in 
$(\tau_1, \ldots, \tau_s)$. If it
is not, we are done. Otherwise, continue with the next element.
Finally, after at most $\binom{m}{2}$ checks, we will find an element which is not in $(\tau_1, \ldots,
\tau_s)$. Since $\Bbbk$ is ordered, each check requires 
$O(\log(m))$ comparisons. 

If $\Bbbk$ is not ordered, then we can not sort the list, so each check requires 
$O(m^2)$ comparisons.

Thus, we use mostly $O(m^2\log(m))$ 
comparisons if $\Bbbk$ is ordered and $O(m^4)$ comparisons otherwise.
Since we repeat the procedure $\overline{n}$ times, we are done with the complexity analysis. 
\end{proof}

\begin{theorem} \label{theorem:linearkbasisiso} Let $P = \{p_1, \ldots, p_m\}$
be a set of distinct points in $\Bbbk^n$. Suppose that $\Bbbk$ contains at
 least $\binom{m}{2}+1$ elements. We give an explicit isomorphism 
$S/I \cong k[x]/J$ and a $\Bbbk$-basis for $S/I$ of the form 
$\{[1],$ $[f], \ldots, [f^{m-1}]\}$, where $f$ is a linear form. The construction uses 
$O(nm)$ comparisons and $O(\min(m,n)m^2 \log(m))$ operations given a total order on $\Bbbk$. If $\Bbbk$ is not
ordered we need $O(nm)$ comparisons and $O(\min(m,n)m^4)$ operations.
\end{theorem}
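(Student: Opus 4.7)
The plan is to package Lemma \ref{lemma:real} together with Lemma \ref{lemma:iso} applied to a linear projection. First I would invoke Lemma \ref{lemma:real} on the point set $P$ to obtain the witness list $i_1 < \cdots < i_{\overline{n}}$ (with $\overline{n}\le \min(m,n)$, since each witness strictly refines the previous partition and a witness is a coordinate index) together with scalars $c_1,\ldots,c_{\overline{n}}\in\Bbbk$ such that the entries of the $m$-vector $\sum_{h=1}^{\overline{n}} c_h(p_{1,i_h},\ldots,p_{m,i_h})$ are pairwise distinct. Then I set
\[
f := \sum_{h=1}^{\overline{n}} c_h\, x_{i_h}\in S,
\]
which is a linear form by construction.

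Next I would define $\pi:\Bbbk^n\to\Bbbk$ by $\pi(a)=f(a)$. By the choice of the $c_h$, the scalars $\pi(p_1),\ldots,\pi(p_m)$ are all distinct, so Lemma \ref{lemma:iso} (applied with target space $\Bbbk=\Bbbk^1$) produces an algebra isomorphism $\pi^\ast:\Bbbk[x]/J\to S/I$, where $J$ is the vanishing ideal of the distinct scalars $\pi(p_1),\ldots,\pi(p_m)$. Since $J$ is the principal ideal generated by $\prod_{i=1}^m(x-\pi(p_i))$, the residues $[1],[x],\ldots,[x^{m-1}]$ form a $\Bbbk$-basis of $\Bbbk[x]/J$. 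The isomorphism $\pi^\ast$ sends $x$ to $f$, so applying it to this basis yields the desired $\Bbbk$-basis $\{[1],[f],\ldots,[f^{m-1}]\}$ of $S/I$.

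Finally, the complexity bounds are inherited directly from Lemma \ref{lemma:real}: the scalars $c_h$ and the witness list are produced in $O(nm)$ comparisons and $O(\min(m,n)m^2\log m)$ operations if $\Bbbk$ is ordered, or $O(\min(m,n)m^4)$ operations otherwise, and assembling the linear form $f$ from these data is free within these bounds. I do not see a genuine obstacle in this proof; it is essentially a clean corollary of Lemmas \ref{lemma:iso} and \ref{lemma:real}. The only point that requires a brief justification is the bound $\overline{n}\le \min(m,n)$, which is needed to match the complexity statement, but this is immediate because the witness list is a subset of $\{1,\ldots,n\}$ and simultaneously each witness strictly enlarges $|\Sigma_i|$, so $\overline{n}\le \overline{m}-1\le m-1$.
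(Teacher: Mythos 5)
Your proposal is correct and follows essentially the same route as the paper: invoke Lemma \ref{lemma:real} to get the witness list and coefficients $c_1,\ldots,c_{\overline{n}}$, form the linear form $f=\sum_h c_h x_{i_h}$, apply Lemma \ref{lemma:iso} to the resulting projection to transport the basis $[1],[x],\ldots,[x^{m-1}]$ of $\Bbbk[x]/J$ to $S/I$, and read off the complexity from Lemma \ref{lemma:real}. The extra justification of $\overline{n}\le\min(m,n)$ is a harmless (and correct) addition.
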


\begin{proof}
 By Lemma \ref{lemma:real}, there exists $(c_1, \ldots, c_{\overline{n}})$ such that all elements in the $m$-vector $c_{1} (p_{1i_1}, \ldots,
 p_{mi_1}) + \cdots + c_{\overline{n}} (p_{1 i_{\overline{n}}},
 \ldots, p_{m i_{\overline{n}}})$ are distinct. If we let $x=c_{1} x_{i_1} + \cdots + c_{\overline{n}} x_{i_{\overline{n}}}$, it follows that $x(p_1), \ldots, x(p_m)$ are distinct. 
 Let $J$ be the ideal in
 $\Bbbk[x]$ vanishing on $x(p_1), \ldots, x(p_m)$. The the principal ideal $J$ is
 generated by $(x-x(p_1)) \cdots (x-x(p_m))$ and 
 a $\Bbbk$-basis for $\Bbbk[x]/J$ is the residues of $1, x, \ldots, x^{m-1}$. By Lemma \ref{lemma:iso}, $\Bbbk[x]/J \cong S/I$ and a $\Bbbk$-basis for $S/I$ can be chosen as the residues of $1, c_{1} x_{i_1} + \cdots + c_{\overline{n}} x_{i_{\overline{n}}}, \ldots, (c_{1} x_{i_1} + \cdots + c_{\overline{n}} x_{i_{\overline{n}}})^{m-1}$. 
 
The cost of the construction is dominated by the computation of the $c_i$'s, so the complexity result follows from Lemma \ref{lemma:real}.
 
 \end{proof}

\begin{example} \label{exampleiso} Consider the points $p_1, p_2,
 p_3,p_4,p_5,p_6$ from Example \ref{example1} as elements in $\mathbb{Z}_{43}$. 
 The witness list equals $\{1,2,4\}$, so we get $i_1 = 1, i_2 =
 2$ and $i_3=4$. The matrix describing the splittings is 
$$
\left( \begin{array}{cccccc|lc}
 1 & 1 & 1 & 0 & 0 & 2 & \Sigma_1 = & \{\{1,2,3 \}, \{4,5 \}, \{6 \} \} \\
 2 & 0 & 2 & 0 & 0 & 1 & \Sigma_2 = & \{\{1,3\}, \{2\}, \{4,5\}, \{6\} \} \\
 0 & 1 & 0 & 2 & 2 & 3 & \Sigma_3 = &\{\{1,3\}, \{2\}, \{4,5\}, \{6\} \} \\
 1 & 1 & 3 & 0 & 1 & 1 & \Sigma_4 = &\{\{1 \}, \{3 \}, \{2\}, \{4\}, \{5\}, \{6\} \}\\
\end{array} \right),
$$ 
where we omit the last two rows of the matrix.
The Distinct element algorithm is as follows. At stage 1: $v_1 =
(1,1,1,0,0,2)$. At stage 2, we see that the type of the second row
is $\{\{1,3\}, \{2,4,5\}, \{6\}\}$. The set of pairs built from
$\Sigma_{i_1}$ equals
$$\{(1,4), (1,5), (1,6), (2,4), (2,5), (2,6), (3,4), (3,5), (3,6), (4,6), (5,6)\}$$ and the set of pairs built from $\{\{1,3\}, \{2,4,5\}, \{6\}\}$ equals
$$\{(1,2), (1,4), (1,5), (1,6), (2,3), (3,4), (3,5), (3,6), (2,6), (4,6), (5,6)\}.$$
The intersection is equal to
$$\{(1,4), (1,5), (1,6), (2,6), (3,4), (3,5), (3,6), (4,6), (5,6)\}.$$
We compute $\tau_1$, which corresponds to the pair $(1,4)$ and equals
$\tau_1 = (v^{(1)}_1 - v^{(1)}_4)/(p_{4 i_2}- p_{1 i_2}) = (1 -
0)/(0-2) = -1/2 = 21$ and similarly for the other pairs to obtain
$$\{\tau_1, \ldots, \tau_9\} = \{21,21,1,42,21,21,42,41,41\}.$$
We choose $c_2 = 2$ to get $v_2^{(2)} = v_1^{(1)} + 2 (p_{1i_2}, \ldots,
p_{6i_2}) = (5,1,5,0,0,4)$, which is of type $\Sigma_{i_2}$ as
desired. At stage 3, the intersection equals
$$\{(1,4), (2,3), (2,4),(3,5), (3,6), (4,6)\}$$ and it turns out that we can use $c_3 = 1$ so that 
$v_3^{(3)} = v_2^{(2)} + (p_{1 i_3}, \ldots, p_{6 i_3}) = (6,2,7,0,1,5)$ which is
of type $\Sigma_{i_3} = \{\{1\}, \ldots, \{6\}\}$. The isomorphism
$$\mathbb{Z}_{43}[x]/(x(x-1)(x-2)(x-5)(x-6)(x-7)) \cong \mathbb{Z}_{43}[x_1, \ldots, x_6]/I$$
of algebras is induced by
$$x \mapsto c_1 x_{i_1} + c_2 x_{i_2} + c_3 x_{i_3} = x_1 + 2x_2 + x_4.$$ Thus, a vector space
basis for $\mathbb{Z}_{43}[x_1, \ldots, x_6]/I$ can be chosen as $$[1], [(x_1 +
2x_2 + x_4)], \ldots, [(x_1 + 2x_2 + x_4)^6].$$ Notice that $B(P)$ becomes a 
Vandermonde matrix and we have
$$
B = \begin{pmatrix}
1^0 	&1^0	&1^0	&1^0	&1^0	&5^0 \\
6^1 	&2^1	&7^1	&0^1	&1^1	&5^1 \\ 
6^2 	&2^2	&7^2	&0^2	&1^2	&5^2 \\
6^3 	&2^3	&7^3	&0^3	&1^3	&5^3 \\
6^4 	&2^4	&7^4	&0^4	&1^4	&5^4 \\ 
6^5 	&2^5	&7^5	&0^5	&1^5	&5^5
\end{pmatrix} \text{ and }
B^{-1}= 
\begin{pmatrix}
0 & 3 & 0 & 32 & 27 & 24\\
0 & 9 & 3 & 33 & 17 & 24\\
0 & 37& 37 & 15 & 10& 30\\
1 &7 &25 &12& 28&13\\
0&25& 28&8& 7& 19\\
0&5& 36& 29&40& 19
\end{pmatrix}.
$$
Let $f = x_1 x_2 x_4 + x_4 x_5 x_6x_7$. To compute the normal
form of $f$, we compute 
$f(P) = (2,0,24,0,0,2)$. By Lemma
\ref{lemma:nfwrtbasis}, we have
$$Nf(f,B) = ([e_1], \ldots, [e_m]) (B(P)^{-1})^t (f(p_1), \ldots, f(p_m))^t.$$
Since $(B(P)^{-1})^t (2,0,4,0,0,2)^t = (0,35,5,10,2,34)^t$ we conclude that 
$$Nf(f,B) = 35 (x_1 +
2x_2 + x_4) + 5(x_1 +
2x_2 + x_4)^2 + 10(x_1 +
2x_2 + x_4)^3$$ $$+ 2(x_1 +
2x_2 + x_4)^4 + 34 (x_1 +
2x_2 + x_4)^5.$$

Recall that there are closed expressions for the inverse of a Vandermonde matrix, so Gaussian elimination is not needed to compute 
$B^{-1}$. For reference, see for instance \cite{Klinger}.

\end{example}
\begin{remark}
 
In practice, the best way to obtain a realization vector $v$
corresponding to $\Sigma_{\overline{n}}$ is by nondeterministic
methods --- check if
$$c_{1} (p_{11}, \ldots, p_{m1}) + \cdots + c_{\overline{n}} (p_{1 i_{\overline{n}}}, \ldots, p_{m i_{\overline{n}}})$$
realizes $\Sigma_{\overline{n}}$, for some pseudo-random elements
$c_i$. This will be the case with probability close to one. If not, we
try with some other coefficients.
\end{remark}

\subsection{Construction 3 - Standard monomials with respect to the lexicographical order} \label{sec:mon} 
It was shown in \cite{CerliencoMureddu} that it is
possible to compute the set of standard monomials with
respect to the lexicographical order by purely combinatorial methods. The authors in \cite{CerliencoMureddu} presented an algorithm but did not make a complexity analysis of it. 
In \cite{Felszeghy}, it was indicated that the number of comparisons in a
straight forward implementation of the algorithm is proportional to
$n^2m^2$. One of the aims of the paper \cite{Felszeghy} was to improve the algorithm. This improved algorithm consists of three steps:
\begin{enumerate}
\item Construct the point trie $T_1$ with respect to $x_n, \ldots, x_1$.
\item Construct the lex trie $T_2$ from $T_1$ (see the Lex trie algorithm below). 
\item Return the set of standard monomials $\{x^{\alpha_1}, \ldots, x^{\alpha_m}\}$ 
with respect to the lexicographical ordering with $x_1 \succ \cdots \succ x_n$, where 
$\{\alpha_1, \ldots, \alpha_m\}$ is the set of paths from the root to the leaves in $T_2$. 
\end{enumerate}
Note that the associated point trie is built backwards, i.e. we read the coordinates of the points from right to left.
In the complexity analysis in \cite{Felszeghy}, it was shown that the first step
requires $O(nmr)$ comparisons, where we recall that $r$ denotes 
the maximal number of edges from a vertex in the trie.
The second step requires 
$O(nm)$ integer summations, and the third step
requires $O(nm)$ readings of integers bounded by $m$. In total, the construction is dominated by 
$O(nmr)$ comparisons. We now improve this.
\begin{theorem} \label{thm:lextrie}
We can compute the set of standard monomials with respect to the lexicographical order using
$O(nm +m\min(m,nr))$ comparisons.
\end{theorem}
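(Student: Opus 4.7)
The plan is to plug the new bound from Theorem \ref{thm:cmpalg} into the three-step algorithm and verify that the remaining two steps contribute no additional arithmetic comparisons. The improvement over the $O(nmr)$ bound from \cite{Felszeghy} is entirely driven by replacing the straightforward point-trie construction in Step 1 with the sharper procedure of Theorem \ref{thm:cmpalg}.

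First, I would observe that Step 1 of the algorithm is nothing more than the construction of a point trie, just with the coordinate order reversed. Since reversing the coordinate order corresponds to rewriting the input vectors and does not change the number of arithmetic comparisons used to build the trie, Theorem \ref{thm:cmpalg} applies verbatim and yields a bound of $nm + m\min(m,nr)$ arithmetic comparisons for this step. Here $r$ has the same meaning as in Section \ref{sec:witness} (the maximal number of children of a vertex in the trie), so the parameter is consistent with the statement of the theorem.

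Next, I would argue that Steps 2 and 3 contribute no arithmetic comparisons at all. The Lex trie algorithm constructs $T_2$ from $T_1$ by purely combinatorial manipulations of the vertex labels of $T_1$, which are integers in $\{1,\ldots,m\}$; as recalled in the discussion preceding the theorem, this uses $O(nm)$ integer summations. Similarly, Step 3 only reads off the paths from the root to the leaves of $T_2$ to produce the exponent vectors $\alpha_1, \ldots, \alpha_m$, using $O(nm)$ integer readings. Per the convention fixed in Section \ref{sec:notation}, such elementary integer operations are neglected in the complexity analysis and, in particular, do not introduce any arithmetic comparisons over $\Bbbk$.

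Combining these observations, the total number of arithmetic comparisons is dominated by Step 1 and is therefore $O(nm + m\min(m, nr))$, as claimed. The only subtle point — and the reason this proof is as short as it is — is to be sure that Theorem \ref{thm:cmpalg} really does deliver the point trie in the sharpened bound; the actual combinatorial work has been offloaded to the appendix, so here there is no genuine obstacle beyond citing it correctly and confirming that the reverse coordinate order in Step 1 does not affect the count.
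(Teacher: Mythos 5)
Your proposal is correct and follows the same route as the paper: invoke Theorem \ref{thm:cmpalg} to build the point trie within the stated bound, and note that the remaining steps of the Felszeghy et al.\ construction involve only elementary integer operations, which are excluded from the comparison count by the convention of Section \ref{sec:notation}. Your write-up is simply a more explicit version of the paper's two-sentence argument.
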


\begin{proof} 
By Theorem \ref{thm:cmpalg}, it is possible to construct the associated point trie using 
$O(nm +m\min(m,nr))$ comparisons. Hence the result follows by using the construction of the standard monomials from the associated point trie as given in 
\cite{Felszeghy}.
\end{proof}

For completeness, we will state the algorithm that construct the standard monomials from the point trie. 
For a proof, we refer the reader to \cite{Felszeghy}. We remark that our formulation is in terms of the $\Sigma_i$'s.

\vspace{0.2cm}
\noindent \textbf{Lex trie algorithm (\cite{Felszeghy})}\\
\noindent 
Fix some stage $h>0$. Let $v_0, \ldots, v_j$ be the set of vertices on level $h$ of the trie 
(at the root level $1$, $v_0 = \{1, \ldots, m\}$). 
For an arbitrary equivalence class $\{i_1, \ldots, i_k\}$ in $\Sigma_{n-h}$, we
let $v_{a,b} = v_{a,b} \cup \{i_k\}$ if $i_k \in v_a$ and exactly $b$ elements in 
$\{i_1, \ldots, i_{k-1}\}$ also belong to $v_a$. (Initially $v_{a,b}$ is empty.)
The vertex set at the $(h+1)$-th level of the trie consists of all nonempty $v_{a,b}$. 
If $v_{a,b}$ is nonempty, there is an edge between $v_{a}$ and $v_{a,b}$ which is labelled by $b$. 
\qed
\vspace{0.2cm}
\noindent 
Since the paper \cite{Felszeghy} does not contain a full example illustrating this algorithm, we give such an example here. 

\begin{example}
Let $p_1 = (1,0,2,1), p_2 = (1,1,0,1), p_3 = (3,0,2,1)$, $p_4 = (0,2,0,0)$, $p_5 = (1,2,0,0)$ and 
$p_6 = (1,3,1,2)$. Suppose that we want to compute the standard monomials of $I(P)$ with respect to the lexicographical order with $x_1 \succ \cdots \succ x_n$. First we have to construct the point trie with respect to the points read from right to left. 
The example is constructed to give the first four rows
from Example \ref{example1}, that is 
\begin{displaymath}
\left( \begin{array}{cccccc|lc}
 1 & 1 & 1 & 0 & 0 & 2 & \Sigma_1 = & \{\{1,2,3 \}, \{4,5 \}, \{6 \} \} \\
 2 & 0 & 2 & 0 & 0 & 1 & \Sigma_2 = & \{\{1,3\}, \{2\}, \{4,5\}, \{6\} \} \\
 0 & 1 & 0 & 2 & 2 & 3 & \Sigma_3 = &\{\{1,3\}, \{2\}, \{4,5\}, \{6\} \} \\
 1 & 1 & 3 & 0 & 1 & 1 & \Sigma_4 = &\{\{1 \}, \{3 \}, \{2\}, \{4\}, \{5\}, \{6\} \}\\
\end{array} \right).
\end{displaymath}
\begin{figure}[ht!] \label{lextrie} \centering
\includegraphics{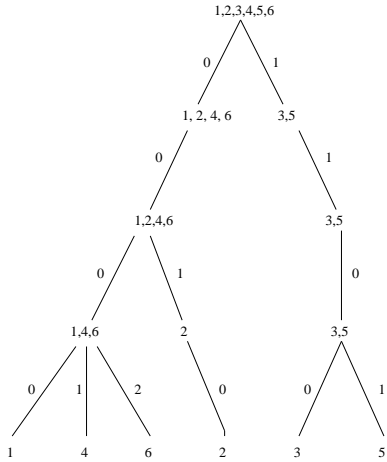}
 \caption{The associated lex trie}
\end{figure}
Figure 2 
shows the lex trie. The paths from the root to the leaves equals 
$(0,0,0,0)$, $(0,0,0,1)$,$(0,0,0,2)$, $(0,0,1,0)$, $(1,0,0,0)$ and $(1,0,0,1)$. It follows that the 
set of standard monomials is equal to $\{1,x_4,x_4^2, x_3, x_1,x_1x_4\}.$

We will now describe the creation of the lex trie stage by stage.

\begin{itemize}
 \item[Stage $1$]
There is only one vertex on the root level; $v_0 = \{1,2,3,4,5,6\}$. We iterate over the equivalence classes of
$\Sigma_{4-1} = \Sigma_3 = \{\{1,3\}, \{2\}, \{4,5\}, \{6\} \}$. We start with 
$\{1,3\}$ and get $v_{00} = \{1\}$ and $v_{01} = \{3\}$ 
For $\{2\}$ we get
$v_{00} = \{1,2\}$. For $\{4,5\}$ we get $v_{00} = \{1,2,4\}$ and $v_{01} = \{3,5\}$. Finally, for
$\{6\}$ we get $v_{00} = \{1,2,4,6\}$.
 \item[Stage $2$]
We rename the two vertices from the previous stage to $v_{0} = \{1,2,4,6\}$ and $v_{1} = \{3,5\}$. 
We start by iterating on the equivalence classes of $\Sigma_{4-2} = \Sigma_2 = \{\{1,3\}, \{2\}, \{4,5\}, \{6\} \}$.
For $\{1,3\}$ we get $v_{00} = \{ 1 \}$ and $v_{10} = \{ 3 \}$, since $1 \in v_0$ and $3 \in v_1$. For $\{2\}$ we get
$v_{00} = \{ 1,2 \}$, since $2 \in v_0$. For $\{4,5\}$, we get $v_{00} = \{1,2,4\}$ and $v_{10} = \{3,5\}$, 
and for $\{6\}$ we get $v_{00} = \{1,2,4,6\}$.
 \item[Stage $3$]
We rename the two vertices from the previous stage to $v_{0} = \{1,2,4,6\}$ and $v_{1} = \{3,5\}$.
On $\Sigma_{4-3} = \Sigma_1$, we begin with $\{1,2,3 \}$. Since $1 \in v_0$, we let
$v_{00} = \{1\}$. Since also $2 \in v_0$, we put $2$ in $v_{01}$. We have then 
$3 \in v_{1}$, thus $v_{10} = \{3\}$. We continue with
$\{4,5 \}$. Since $4 \in v_{0}$, we let $v_{00} = \{1,4\}$ and since $5 \in v_{1}$, we let $v_{10} = \{3,5\}$. 
Finally, for $\{6\}$ we get $v_{00} = \{1,4,6\}$.
 \item[Stage $4$]
We rename the vertices from the previous stage to $v_{0} = \{1,4,6\}$, $v_{1} = \{2\}$ and $v_{3} = \{3,5\}$.
In $\Sigma_{4-4} = \Sigma_0$ there is only one equivalence class: $\{1,2,3,4,5,6\}$. 
We have: 
$1 \in v_{0} \Rightarrow v_{00} = \{1\}$,
$2 \in v_{1} \Rightarrow v_{10} = \{2\}$,
$3 \in v_{2} \Rightarrow v_{20} = \{3\}$,
$4 \in v_{0} \Rightarrow v_{01} = \{4\}$,
$5 \in v_{3} \Rightarrow v_{21} = \{5\}$ and
$6 \in v_{0} \Rightarrow v_{02} = \{6\}.$

\end{itemize}
\end{example}

\subsection{Construction 4 - Standard monomials for some elimination orders} \label{sec:conssmelim}

In this section, we will 
study a method that can be used to create standard monomials with respect to some 
elimination orders. An elimination order $\prec$ with respect to the variables $x_1, \ldots, x_{i-1}$ is an admissible monomial order on $S$ satisfying the condition
$$\ini(f) \in \Bbbk[x_i, x_{i+1}, \ldots, x_{n}] \Rightarrow f \in \Bbbk[x_{i},x_{i+1}, \ldots, x_{n}]. $$ 
We will show that using $O(nm)$ comparisons and $O(\min(m,n)m^3)$ 
 operations we can compute the set of standard monomials with respect to some 
elimination orders and, thus, for these orders, our method has 
better asymptotic behavior than the Buchberger-M\"oller algorithm when $m<n$.

A classic example of an elimination order is the lexicographical order 
with respect to $x_1 \succ \cdots \succ x_n$ (However, for 
the lexicographical order, we already have a fast method to compute the standard monomials.) 

We will construct an elimination order from two partial orders as follows. Let $\prec_1$ be any admissible monomial order on $\{x_{1},x_{2}, \ldots, x_{i-1}\}$ and let $\prec_2$ 
be any admissible monomial order on $\{x_{i},x_{i+1}, \ldots, x_{n}\}$. Now define $\prec$ 
by $x^{\alpha} \prec x^{\beta}$ if 
$x_i^{\alpha_i} \cdots x_n^{\alpha_n} \prec_2 x_i^{\beta_i} \cdots x_n^{\beta_n}$ or
$x_i^{\alpha_i} \cdots x_n^{\alpha_n} = x_i^{\beta_i} \cdots x_n^{\beta_n}$ and
$x_1^{\alpha_1} \cdots x_{i-1}^{\alpha_{i-1}} \prec_1 x_1^{\beta_1} \cdots x_{i-1}^{\beta_{i-1}}$.
The order $\prec$ then becomes an elimination order with respect to $\{x_{1},x_{2}, \ldots, x_{i-1}\}$.

Let $\{p_1, \ldots, p_m\}$ be a set of distinct
points in $\Bbbk^n$. Let $\tau$ be a permutation of $\{1, \ldots, n\}$. 
Define $\tau(p_i) = (p_{i\tau(1)}, \ldots, p_{i\tau(n)})$. Simplified, this means that $\tau(p_i)$ is 
$p_i$ with respect to the coordinates $x_{\tau(1)}, \ldots, x_{\tau(n)}$. 
Let $W_{\tau} = \{i_1, \ldots, i_{\overline{n}}\}$ be the witness list derived from the
$\Sigma$-algorithm with respect to $\tau(p_1), \ldots, \tau(p_n)$. 
By Lemma \ref{lemma:diffpoints}, the points
$q_1, \ldots, q_m$ are distinct, where $q_j = \pi(\tau(p_j))$ and 
$\pi((a_1, \ldots,a_n)) = (a_{i_1}, \ldots, a_{i_{\overline{n}}}).$
Let $W_{\tau}^c = \{j_1, \ldots, j_{\underline{n}}\}$ be the complementary set to 
$W_{\tau}$.
Let $\prec_1$ be any admissible monomial order on $x_{j_1}, \ldots, x_{j_{\underline{n}}}$. Let
$\prec_2$ be any admissible monomial order on $x_{i_1}, \ldots, x_{i_{\overline{n}}}$. Let $B$ be the 
set of standard monomials with respect to $\prec_2$. 
If $\prec$ is the elimination order constructed from $\prec_1$ and $\prec_2$, 
then it is clear that $B$ is the set of standard monomials with respect to $\prec$ as well.
We determine $B$ by the combinatorial algorithm in the case when $\prec_2$ is the lexicographical order. When 
$\prec_2$ is another order, we use the Buchberger-M\"oller algorithm. Except for complexity issues, we have proved
the following theorem.

\begin{theorem} \label{thm:elim}
Using the notation above, for any permutation $\tau$ and for any
elimination order $\prec$ with respect to $x_{j_1}, \ldots, x_{j_{\underline{n}}}$, we can
determine the set $B$ of standard monomials with respect to $\prec$ using
at most $O(nm)$ comparisons and $O(\min(m,n)m^3)$ operations. 
If $\prec$ is the lexicographical order on $x_{i_1}, \ldots, x_{i_{\overline{n}}}$, then we determine
$B$ using at most $O(nm+m\min(m,nr))$ comparisons.

In both cases, there is a normal form method
which can be initiated using $O(m^3)$ operations. The normal form method uses at most $O(|f|sm+m^2)$ 
operations for a polynomial $f$ with $s$ monomials.
\end{theorem}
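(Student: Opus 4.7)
I would organize the proof around three stages: refining the points, constructing $B$, and setting up the normal form apparatus. The correctness of $B$ as the standard monomials of $I$ under $\prec$ follows from the elimination-order argument in the paragraph preceding the theorem; what remains is the complexity.

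Stage 1: Obtain the witness list $W_\tau$ of $\tau(p_1),\ldots,\tau(p_m)$. Apply Theorem \ref{thm:cmpalg}, whose cost is $O(nm + m\min(m, rn))$ comparisons; since $m\min(m, rn) \le m^2$, the extra term is dwarfed by the $O(\min(m,n)m^3)$ operations budget, leaving $O(nm)$ comparisons to advertise. Lemma \ref{lemma:diffpoints} certifies that the projected points $q_j = \pi(\tau(p_j))$ in $\Bbbk^{\overline{n}}$, with $\overline{n} \le \min(m,n)$, are distinct.

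Stage 2: Compute $B$. For a general admissible $\prec_2$, run Buchberger--M\"oller on $q_1,\ldots,q_m \in \Bbbk^{\overline{n}}$. Substituting $\overline{n}\le\min(m,n)$ for $n$ in the Buchberger--M\"oller bound $O(nm^2 + \min(m,n)m^3)$ recalled in the introduction yields $O(\min(m,n)m^3)$ operations and $O(nm)$ comparisons for this step. When $\prec_2$ is the lexicographical order on $x_{i_1},\ldots,x_{i_{\overline{n}}}$, apply Theorem \ref{thm:lextrie} to the $q_j$'s instead, which produces $B$ combinatorially in $O(nm + m\min(m, nr))$ comparisons.

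Stage 3: Set up the normal form. The plan is to assemble $B(P)$ and $B(P)^{-1}$ once, so that Proposition \ref{prop:normalformPoints} yields the per-call cost $O(|f|sm + m^2)$. In the general case $B(P)$ falls out of Buchberger--M\"oller. In the lex case, where $B$ is produced combinatorially, I would exploit that standard monomials form a divisor-closed staircase: order $B$ so that every element after $1$ is of the form $x_k\cdot b$ with $b\in B$ already evaluated at $P$; each new row of $B(P)$ then costs $m$ multiplications, so the whole matrix is filled in $O(m^2)$ operations. Gaussian elimination inverts $B(P)$ in $O(m^3)$ operations, comfortably within the $O(\min(m,n)m^3)$ budget.

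The main obstacle will be bookkeeping: aligning the three different complexity bounds (witness list, Buchberger--M\"oller or lex trie, matrix inversion) so that the $O(nm)$ comparison budget is not blown by the extra $m\min(m,rn)$ comparison term from Theorem \ref{thm:cmpalg}, and so that the $O(\min(m,n)m^3)$ operation budget is preserved when Buchberger--M\"oller is fed $\overline{n}\le\min(m,n)$ variables rather than the original $n$. Once this accounting is done, the theorem assembles directly from Proposition \ref{prop:normalformPoints}, Lemma \ref{lemma:diffpoints}, and Theorems \ref{thm:cmpalg} and \ref{thm:lextrie}.
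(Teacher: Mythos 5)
Your proposal follows the same route as the paper's proof: preprocess via Theorem \ref{thm:cmpalg}, invoke Lemma \ref{lemma:diffpoints} to pass to the $\overline{n}\leq\min(m,n)$ witness coordinates, run Buchberger--M\"oller (or the lex trie construction) on the projected points to get the stated bounds, and appeal to Proposition \ref{prop:normalformPoints} for the normal form costs. Your Stage 3 merely makes explicit the $O(m^3)$ initialization (building and inverting $B(P)$) that the paper leaves implicit, so the argument is correct and essentially identical.
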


\begin{proof}
By Theorem \ref{thm:cmpalg}, it takes $O(nm +m\min(m,nr))$ comparisons to preprocess the points. The call to the Buchberger-M\"oller algorithm uses $O(\overline{n}m^2 + \min(m,\overline{n})m^3)$ operations by Theorem 2 in \cite{lundqvist}. Since $\overline{n} \leq \min(m,n)$, the call requires at most $O(\min(m,n)m^3)$ operations. 
On the other hand, if we use the combinatorial method in the case when $\prec_2$ is the lexicographical order, we would get $O(nm + m\min(m,nr))$ comparisons as bound. 

Hence, to compute $B$, we use at most $O(m^4)$ operations and $O(nm)$ comparisons when $\prec$ is not the lexicographical order. When 
$\prec$ is the lexicographical order, we use $O(nm +m\min(m,nr))$ comparisons. The statement about
the normal form method follows from Proposition \ref{prop:normalformPoints}. 

\end{proof}

\begin{example}

Let $p_1, \ldots, p_6$ be the points in Example \ref{example1}. Let $\tau$ be the permutation
$$
\begin{pmatrix} 
1& 2& 3& 4& 5& 6& 7& 8 \\
8& 2& 3& 4& 6& 5& 7& 1
\end{pmatrix}
$$

To simplify notation, let $y_i = x_{\tau(i)}$. We compute the 
$\Sigma_i$'s with respect to $y_8, y_7,  \ldots, y_1$. Since 
$\Sigma_3$ equals $\{\{1 \}, \{2 \}, \{3\}, \{4\}, \{5\}, \{6\} \}$, we only display the first three rows.

 \begin{displaymath}
\left( \begin{array}{cccccc|lc}
 1 & 1 & 1 & 0 & 0 & 2 & \Sigma_1 = & \{\{1,2,3 \}, \{4,5 \}, \{6 \} \} \\
 3 & 3 & 2 & 2 & 2 & 2 & \Sigma_2 = &\{\{1,2 \}, \{3 \}, \{4,5\}, \{6\} \}\\
 1 & 2 & 3 & 4 & 5 & 6 & \Sigma_3 = &\{\{1 \}, \{2 \}, \{3\}, \{4\}, \{5\}, \{6\} \}
\end{array} \right).
 \end{displaymath}
 
Thus, $W_1 = \{1,2,3\}$ and $S/I$ is isomorphic to 
$\Bbbk[y_6,y_7,y_8]/I(Q)$, where $Q = \{(1,3,1), (1,3,2), \ldots, (2,2,6)\}$. 
Let $\prec_2$ be the degree reverse lexicographical order with 
$y_6 \succ_2 y_7 \succ_2 y_8$. A call to the Buchberger-M\"oller algorithm gives $B_{\prec_2} = \{1,y_8,y_7,y_6,y_8^2,y_6y_8\}$. Let $\prec_1$ be the lexicographical order
on $\Bbbk[y_1,\ldots,y_5]$ with $y_1 \succ_1 \cdots \succ_1 y_5$
and construct $\prec$ from $\prec_1$ and $\prec_2$ above. Then $B_{\prec} = \{1,y_8,y_7,y_6,y_8^2,y_6y_8\}$.

Suppose that we want to compute the normal form of $f = y_8 y_2 y_4 + y_4 y_6 y_5 y_7$.
We determine $B(\tau(P))^{-1}$ and $(\tau(f(p_{1})), \ldots, \tau(f(p_6))) =(2,0,24,0,0,2).$ 
Notice that $\tau(f(p_{1i})) = g(p_i)$ where $g = x_1 x_2 x_4 + x_4 x_5 x_6 x_7$. 
The normal form will be
$$(1, y_8, y_7,y_6,y_8^2,y_6y_8)(B(\tau(P))^{-1})^t (\tau(f(p_1)), \ldots, \tau(f(p_m)))^t$$ $$=
12 + 18y_8 + 37 y_7 + 35 y_8^2 + 41 y_6y_8.$$

In terms of the $x_i$'s, the order $\prec$ is the degree reverse lexicographical order on $\Bbbk[x_5, x_7,x_1]$ with
$x_5 \succ x_7 \succ x_1$ and it is the lexicographical order on $\Bbbk[x_8, x_2,x_3,x_4,x_6]$ with 
$x_8 \succ x_2 \succ x_3 \succ x_4 \succ x_6$. The normal form of $x_1 x_2 x_4 + x_4 x_5 x_6 x_7$
is $12 + 18x_1 + 37 x_7 + 35 x_1^2 + 41 x_1x_5$.

\end{example}

\section{Biological implications} \label{sec:bio}

In the algebra approach to reverse engineering, 
we have some experimental data $S = \{s_1, \ldots s_{m+1}\}$, where each $s_i$ is a realvalued vector of size $n$ and
$m \ll n$.
Each $s_{ij}$ is then discretized into a prime number $p$ of states so that
the $s_i$'s can be viewed as elements in $\mathbb{Z}_p^n$. 
For the discretized data, we want to find transition functions $h_1, \ldots, h_{n}$ such that
$h_i(s_j) = s_{j+1,i}$ for $j =1, \ldots, m$. Finally, we wish to find the normal form of 
the $h_i$'s with respect to a set $B_{\prec}$ of standard monomials for $I(S)$, 
for some admissible monomial order $\prec$ on $\mathbb{Z}_p[x_1, \ldots, x_n]$.

It is easily seen that 
\begin{equation} \label{eq:timeseries} 
h_i = s_{2,i} f_1 + \cdots + s_{m+1,i} f_{m},
\end{equation}
where $f_1, \ldots, f_{m}$ is a set of separators with respect to $s_1, \ldots, s_{m}$. 

An example illustrating the method in \cite{LaubStigler04} is given in the same paper: 
After discretizing over $\mathbb{Z}_3$, one has
$$s_1 = (2,2,2), s_2 = (1,0,2), s_3 = (1,0,0), s_4 = (0,1,1), s_5 = (0,1,1).$$ For computing the 
$h_i$'s, the authors use an $O(n^2m^2)$ algorithm to get
$h_1 = x_1^2 x_3 +2 x_1^2 + x_1x_3 + x_1$, 
$h_2 = 2x_1^2x_3 + x_1^2 +2 x_1x_3 +2 x_1 +1$,
$h_3 = 2x_1^2x_3 +2 x_1^2 +2 x_1x_3 + x_1 + 1$.

Then, the lexicographical order with respect to $x_1 \succ x_2 \succ x_3$ is used to determine a 
Gr\"{o}bner basis for $I(\{s_1,s_2,s_3,s_4\})$ by means of the Buchberger-M\"oller algorithm. This Gr\"{o}bner basis is being equal to 
$\{x_1+x_2+2,x_2x_3 + x_2 + 2x_3^2 +2 x_3, x_2^2+x_2 +2 x_3^2 +2 x_3\}$.
Finally, the $h_i$'s are reduced using the Gr\"{o}bner basis and we get
$$\nf(h_1,B) = -x_3^2 + x_3, \nf(h_2,B) = x_3^2 - x_3+1,\nf(h_3,B) = -x_3^2 + x_2+1,$$ 
where $B$ denotes the complement to the initial ideal with respect to the chosen order.
Since the normal form is computed by means of the reduction with respect to the Gr\"obner basis, and not by means of Lemma \ref{lemma:nfwrtbasis}, the worse time 
complexity for this part for general $m$ and $n$ is reported by the authors to be $O(n(m-1)2^{cm+m-1})$, using the bound in \cite{Dube}.

Our approach is the following. Firstly, we determine the set of standard monomials of 
$I(S)$ using the 
Lex trie algorithm to get $B = \{1,x_3,x_3^2,x_2\}$. The equality $s_4 = s_5$ is detected during the 
$\Sigma$-algorithm and we need to use only $P=\{s_1, \ldots, s_4\}.$ 
We get 
$$B(P) = 
\begin{pmatrix}
1 & 1 & 1 & 1\\
2 & 2 & 0 & 1\\
1 & 1 & 0 & 1\\
2 & 0 & 0 & 1
\end{pmatrix}
\text{ and } 
B(P)^{-1} = 
\begin{pmatrix}
0&2&2&2\\
0&2&0&1\\
1&0&2&0\\
0&2&2&0
\end{pmatrix}.
$$
By Lemma \ref{lemma:sepfrombasis}, 
$$(f_1, f_2, f_3, f_4)^t = B(P)^{-1} (1,x_3,x_3^2,x_2)^t$$ 
$$= (2x_3 + 2x_3^2 + 2x^2, 2x_3 + x_2, 1 + 2x_3^2, 2x_3 + 2x_3^2)^t.$$
Finally, we determine the normal forms of the $h_i$'s in terms of the elements in $B$ by using (\ref{eq:timeseries}):
$$\nf(h_1,B) = 1 f_1 + 1 f_2 + 0 f_3 + 0 f_4 = x_3 + 2 x_3^2$$ and similarly 
$$\nf(h_2,B) = 1 + 2x_3 + x_3^2 \text{ and } \nf(h_3,B) = 1 +2x_3^2 + x^2.$$

We will show below that for determining the $h_i$'s, it is in general enough to use $O(m^3)$ operations. 
In total, our approach uses $O(nm)$ comparisons and $O(m^3)$ operations, which is drastically better than
the exponential algorithm involving reduction with respect to the Gr\"{o}bner basis. Notice that we do not
use the construction of the separators from Section \ref{sec:conssep}. That is due to our wish to write them as linear 
combinations of elements in $B$. 

We are aware of the fact that lexicographical order is not always the best choice for a monomial order in these applications. 
Since $m \ll n$, almost any admissible monomial order will be an elimination order and
we believe that the method in Section \ref{sec:conssmelim} can be used in a lot of cases 
to find a feasible order. 

To complete the section, we state the following general theorem, where the complexity results are written assuming that $m<n$.

\begin{theorem}
The number of arithmetic operations in our approach to the reverse engineering method presented in 
$\cite{LaubStigler04}$ is
$O(nm)$ comparisons and $O(m^3)$ operations when $\prec$ is the lexicographical order. For the elimination orders in 
Section \ref{sec:conssmelim}, a bound for the number of comparisons is $O(nm)$ and a bound for 
the number of operations is $O(\min(m,n)m^3)$. For an arbitrary order, the
number of operations is bounded by $O(nm^2 + \min(m,n)m^3)$.
\end{theorem}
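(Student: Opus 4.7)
The plan is to add up the costs of the four stages of the reverse-engineering pipeline described in the preceding paragraphs and invoke the results already proved. Stage one is the combinatorial preprocessing: an application of Theorem~\ref{thm:cmpalg} to the $m+1$ samples both detects any coincidences $s_i=s_j$ and produces the witness data. Under the hypothesis $m\le n$, the bound $O(nm+m\min(m,nr))$ collapses to $O(nm)$ comparisons; after relabeling we may assume that $s_1,\ldots,s_m$ are distinct and work with $I=I(\{s_1,\ldots,s_m\})$.

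Stage two builds the basis $B$ of standard monomials of $S/I$ adapted to $\prec$. For the lexicographical order I plug in Theorem~\ref{thm:lextrie}, giving $O(nm)$ comparisons and no arithmetic; for the elimination orders of Section~\ref{sec:conssmelim} I plug in Theorem~\ref{thm:elim}, giving $O(nm)$ comparisons plus $O(\min(m,n)m^3)$ operations; for an arbitrary $\prec$ I fall back on the Buchberger--M\"oller bound $O(nm^2+\min(m,n)m^3)$ from \cite{lundqvist}. Stage three is the linear algebra on $B(P)$: the $m$ monomials of $B$ are evaluated at the $m$ points in $O(m^2)$ operations by walking the monomial trie of $B$ (each new value obtained from a neighbour by a single multiplication), and Gaussian elimination then inverts the resulting $m\times m$ matrix in $O(m^3)$ operations.

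Stage four extracts the $h_i$'s via Lemma~\ref{lemma:nfwrtbasis}. The essential observation is that $h_i(P)=(s_{2,i},\ldots,s_{m+1,i})^t$ is read directly from the input data, so no polynomial evaluation of $h_i$ is required: computing all $n$ normal forms in parallel amounts to the single matrix product $(B(P)^{-1})^t D$, where $D$ is the $m\times n$ data matrix whose columns are the vectors just described. Summing over the four stages yields the stated bounds in each of the three regimes, with the elimination and generic bounds directly absorbing the cost of this matrix product. The step requiring the most care is the lexicographical case, where I must check that, under $m\le n$, every term produced by the pipeline fits inside $O(nm)$ comparisons or $O(m^3)$ operations and that the final matrix product is charged against the cubic-in-$m$ inversion cost rather than contributing a separate $nm^2$ term; this is the only bookkeeping step in the argument, the rest being a straight citation chain.
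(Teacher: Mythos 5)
Your decomposition and citation chain coincide with the paper's own proof: both reduce the theorem to (i) constructing $B$ via Theorem~\ref{thm:lextrie}, Theorem~\ref{thm:elim}, or the Buchberger--M\"oller bound from \cite{lundqvist}, (ii) inverting $B(P)$, and (iii) computing the $h_i$'s from the formula $h_i=(s_{2,i},\ldots,s_{m+1,i})B(P)^{-1}(e_1,\ldots,e_m)^t$, which is exactly (\ref{eq:timeseries}) combined with Lemma~\ref{lemma:sepfrombasis} (your route through Lemma~\ref{lemma:nfwrtbasis} and the observation that $h_i(P)$ is read off from the data is the same computation). Your extra remarks on evaluating $B(P)$ incrementally in $O(m^2)$ operations and on detecting $s_i=s_j$ during preprocessing are fine and are simply left implicit in the paper.

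The problem is the step you yourself single out and then do not carry out. You write that you ``must check'' that the product $(B(P)^{-1})^tD$, with $D$ of size $m\times n$, can be ``charged against the cubic-in-$m$ inversion cost,'' but that check is precisely where the whole lexicographical bound lives, and as posed it fails: the product costs $\Theta(nm^2)$ operations, and under the standing assumption $m<n$ this strictly dominates $m^3$. No bookkeeping trick charges an $nm^2$ cost to an $m^3$ budget. The paper closes this step differently: it counts $O(m^2)$ operations \emph{per transition function} and states $O(m^3)$ for ``all $h_i$'s,'' i.e.\ its accounting treats the number of computed transition functions as $O(m)$ rather than $n$ (equivalently, the $O(m^3)$ is a per-batch-of-$m$ cost). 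You should either adopt that per-function accounting explicitly, or state the total as $O(nm^2)$ for all $n$ transition functions; leaving the reconciliation as an unexecuted ``check'' is a genuine gap, since the reconciliation you propose (absorption into the inversion cost) is not available.
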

\begin{proof}
Except for determining the standard basis $B$ and inverting $B(S)^{-1}$, 
the only necessary computation is determination of the $h_i$'s. By (\ref{eq:timeseries}) and Lemma \ref{lemma:sepfrombasis} we have
$$h_i = (s_{2,i}, \ldots, s_{m+1,i}) B^{-1} (e_1, \ldots, e_m)^t.$$ 
Thus, computation
of each $h_i$ requires $O(m^2)$ operations and to compute all $h_i$'s requires 
$O(m^3)$ arithmetic operations. 

The theorem for the lexicographical case follows from 
Theorem \ref{thm:lextrie}. In the elimination case the result follows from Theorem \ref{thm:elim} 
and in the general case it follows from Theorem 2 in \cite{lundqvist}, where it is shown that the 
Buchberger-M\"oller algorithm
uses at most $O(nm^2 + \min(m,n)m^3)$ operations.
\end{proof}

\section{Acknowledgment}

The author would like to thank Clas L\"ofwall for valuable comments on the paper and J\"orgen Backelin for presenting useful ideas in the proof of 
Proposition \ref{prop:sigmaalg}.

\appendix
\section{Appendix} \label{app}
We will describe two combinatorial algorithms to perform preprocessing of
the points. While the first algorithm iterates over the coordinates, the second one, given in \cite{Felszeghy}, iterates over the points. 
However, the two algorithms turn out to perform exactly the same comparisons and we prove that the number
of comparisons that are needed is bounded by $nm + m\min(m,nr)$. 

\subsection{Preprocessing by iterating over the coordinates} \label{subsec:sigma}
Here we present an algorithm which computes the $\Sigma_i$'s, the witness list and the witness matrix by iterating 
over the coordinates. The number of operations of this algorithm is
bounded by $O(nm + m^2)$. We will sharpen the number of operations in Section \ref{section:compare}.

\begin{proposition} \label{prop:sigmaalg}
Let $\Omega$ be a set equipped with an equivalence relation.
Let $v_1, \ldots, v_m$ be $n$-tuples of elements in $\Omega$.
 The $\Sigma$-algorithm below computes the equivalence classes $\Sigma_1, \ldots, \Sigma_n$ using
at most $n m + m^2$ $\Omega$-comparisons. 

\end{proposition}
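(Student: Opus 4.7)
The plan is to present the $\Sigma$-algorithm as a natural refinement procedure and then to classify each comparison it makes as either \emph{confirming} or \emph{separating}, bounding the two counts independently. The algorithm I would write down maintains the partition $\Sigma_0 = \{\{1, \ldots, m\}\}$ and, for $i = 1, \ldots, n$, refines $\Sigma_{i-1}$ to $\Sigma_i$ by subdividing every non-singleton class $\sigma \in \Sigma_{i-1}$ according to the $i$-th coordinates $v_{j,i}$, $j \in \sigma$. The refinement of a single class $\sigma$ is carried out by a repeated pivot step: pick some $j^{\ast} \in \sigma$, compare $v_{j,i}$ with $v_{j^{\ast},i}$ for every other $j \in \sigma$, collect the matches into one sub-class containing $j^{\ast}$, and recurse on the non-matches. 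Correctness is immediate from the definition of $\Sigma_i$. The witness list $W$ and witness matrix $C$ can be recorded on the fly: $i$ is inserted into $W$ as soon as some class strictly refines at coordinate $i$, and $c_{jk}$ is set to $i$ at the moment a separating comparison between $v_{j,i}$ and $v_{k,i}$ first occurs.

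Next I would classify every comparison performed by the algorithm. Call a comparison \emph{separating} if its two operands differ in $\Omega$ and \emph{confirming} otherwise. Every comparison is between $v_{j,i}$ and $v_{k,i}$ for some pair $\{j,k\}$ lying in a common block of $\Sigma_{i-1}$. A separating comparison places $j$ and $k$ in distinct blocks of $\Sigma_i$, so this pair is never compared again at any later coordinate. Consequently, the total number of separating comparisons over the entire execution is at most $\binom{m}{2} \le m^2$.

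The remaining task is to bound the confirming comparisons. Fix a coordinate $i$ and a class $\sigma \in \Sigma_{i-1}$ that splits into $t_\sigma$ sub-classes at this step. In round $\ell$ of the pivot procedure applied to $\sigma$ we start with $s_\ell$ remaining elements, perform $s_\ell - 1$ comparisons, and isolate a sub-class of size $k_\ell$; precisely $k_\ell - 1$ of those comparisons confirm and the other $s_\ell - k_\ell$ separate. Summing over rounds, $\sigma$ contributes $\sum_\ell (k_\ell - 1) = |\sigma| - t_\sigma$ confirming comparisons. Summing over $\sigma \in \Sigma_{i-1}$ and using $\sum_\sigma t_\sigma = |\Sigma_i|$ together with $\sum_\sigma |\sigma| = m$, the total number of confirming comparisons at coordinate $i$ equals $m - |\Sigma_i| \le m$. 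Summing over $i = 1, \ldots, n$ yields at most $nm$ confirming comparisons.

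Adding the two bounds gives the required $nm + m^2$. The expected obstacle is not the arithmetic of the count itself but phrasing the algorithm precisely enough that the pivot-round accounting is unambiguous; once the rounds are nailed down, the decomposition of each round's $s_\ell - 1$ comparisons into $k_\ell - 1$ confirms and $s_\ell - k_\ell$ separates carries the whole argument.
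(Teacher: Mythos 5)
Your proof is correct: the algorithm you describe (repeated pivoting inside each non-singleton class) is exactly the paper's $\Sigma$-algorithm, and your bound matches. Where you differ from the paper is in the charging scheme for the complexity count. The paper partitions the work by \emph{passes over a set $T$}: a pass that produces a split costs at most $m$ comparisons and there are at most $\overline{m}-1$ such passes (each split increases the number of classes), giving $m^2$; passes that produce no split cost at most $m$ per stage over $n$ stages, giving $nm$. You instead partition the work by \emph{individual comparisons}: separating comparisons are injectively assigned to pairs $\{j,k\}$ (since a separated pair never shares a block again), giving $\binom{m}{2}$, and confirming comparisons are counted exactly as $m-|\Sigma_i|$ per coordinate, giving $nm$. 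Your accounting is somewhat sharper and more self-contained --- the $\binom{m}{2}$ bound on separating comparisons is tighter than the paper's $(\overline{m}-1)m$, and the per-coordinate count $m-|\Sigma_i|$ is exact rather than an upper estimate --- while the paper's pass-based count has the advantage of feeding directly into the later comparison with the Point trie algorithm (Proposition~\ref{prop:equalcmp}), where the relevant unit is again the set of comparisons made against a fixed pivot. Either argument establishes the stated $nm+m^2$ bound.
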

\begin{proof}
The proof consists of three parts.
\vspace{0.2cm}

\noindent{\textbf{Formulation of the $\Sigma$-algorithm }}

\noindent 
At stage $0$, let $W_0 = \{ \}$ and let $C$ consist of zero entries only. \\
At stage $h$, let $W_h$ be the witness list with respect to
$\pi_h(v_1), \ldots, \pi_h(v_m)$, let $C$ be the witness matrix with respect to
$\pi_h(v_1), \ldots, \pi_h(v_m)$ and let 
$\Sigma_{h} = \{\Sigma_{h,1}, \ldots, \Sigma_{h,k}\}$.

\noindent
At stage $h+1$, let $\Undone$ be the set of elements in $\Sigma_h$
which contains at least two elements. Let $\Done$ be the set of
elements in $\Sigma_h$ which contains only one element. Proceed as
follows. 

Pick a set $T$ from $\Undone$ and let $\Undone = \Undone
\setminus \{T\}$. Let $i$ be the first element in $T$ and let $T_1$
be the set of indices $j$ in $T$ for which $v_{i,h+1} \neq v_{j,h+1}$.
Let $T_2 = T \setminus T_1$ and $\Done = \Done \cup \{T_2\}$. If $T_1$
contains exactly one element, let $\Done = \Done \cup \{T_1\}$. If
$T_1$ contains more than one element, let $\Undone = \Undone \cup
\{T_1\}$. Also set $c_{\min(i,j),\max(i,j)} = h+1$, for all $i \in T_2$ and all
$j \in T_1$. Repeat until $\Undone$ is empty and finally let
$\Sigma_{h+1} = \Done$. If at least one $T_1$ was non-empty during the
process, let $W_{h+1} = W_h \cup \{h+1\}$. Otherwise, let $W_{h+1} =
W_h.$ $\Undone$ will eventually get empty since we remove a set $T$
from $\Undone$ in each step described above, and in the cases when we
insert an element, this element will have lower cardinality compared
to the set that we removed.
\noindent
We stop the algorithm either when $\Sigma_{h+1}$ contains no elements with
more than one element or after performing the $n$th step. If we stop at stage
$h+1$, we set $W = W_{h+1}$ and $\Sigma_{i} = \Sigma_{h+1}$ for $i = h+1, \ldots, n$. 
\vspace{0.2cm}

\noindent{\textbf{The correctness of the algorithm}}

\noindent Clearly $\Sigma_0 = \{\{1,2,\ldots, m\}\}$ and $W_0 = \{ \}$
agree with the assumptions made at stage $h=0$. After stage $h+1$, we
also see that $\Sigma_{h+1}$ contains disjoint subsets. Suppose that
we pick two elements $i$ and $j$ from different subsets of
$\Sigma_{h+1}$ and suppose that $v_{ik} = v_{jk}$ for $k \leq h$.
Then $i$ and $j$ are in the same equivalence class of $\Sigma_h$, and are
splitted in stage $h+1$. Thus $v_{i,h+1} \neq v_{j,h+1}$ and
$c_{ij} = h+1$. Suppose that we pick two elements $i$ and $j$ from
the same subset. Then $v_{i,h+1} = v_{j,h+1}$ and by assumption, $v_{ik} = v_{jk}$ for $k \leq h$, so we conclude that $v_{ik} = v_{jk}$ 
for all $k \leq h+1$ and, hence, $c_{ij} = 0$. Thus the assumptions
made at stage $h$ hold also for stage $h+1$. Besides from the
complexity, the correctness of the proposition
now follows by performing the algorithm to stage $n$. 
\vspace{0.2cm}

\noindent{\textbf{The complexity of the algorithm}}

\noindent We will split a set $T$ into two sets $T_1$ and $T_2$, were
$T_1$ is non-empty, at most $\overline{m}-1$ times. Every time we split,
we will perform at most $m$ comparisons, resulting
in a bound of $m^2$ comparisons. At each stage there are at
most $m$ comparisons resulting in a non-splitting. At most $n$ times
we will not perform a splitting of $T$. Still, we need to perform
$m$ comparisons in order to make sure we do not need to split,
resulting in a bound of $nm$ comparisons.
Thus, the overall upper bound is $nm + m^2$ comparisons. 

\end{proof}

\begin{remark}
It is clear that the number of elementary integer operations for the construction of 
for $W$ and $C$ is $O(\overline{n})$ and $O(m^2)$ respectively.
\end{remark}

Suppose that $\Omega$ is equipped with a total order and suppose that 
$v_1, \ldots, v_m$ are sorted lexicographically with respect to this order. Then, if
$v_{h,j} \neq v_{h,j+1}$, we know that $v_{h,j} \neq v_{h,j+2}$, $v_{h,j} \neq v_{h,j+3}$ and so on. 
For every stage $h$, we need to perform at most $m$ comparisons, thus we can 
compute the $\Sigma_i$'s using only $O(nm)$ comparisons. Since the complexity of
sorting $v_1, \ldots, v_m$ is $O(nm\log(m))$, we have the following proposition.


\begin{proposition} \label{prop:sortalg}
Suppose that $\Omega$ is equipped with a total order. The $\Sigma_i$'s can be computed 
using $O(nm)$ comparisons if the $v_i$'s are sorted and 
$O(nm\log(m))$ comparisons otherwise.
\end{proposition}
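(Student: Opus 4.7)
The plan is to handle the sorted case first and then reduce the unsorted case to it by a preliminary sort.

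Assume that $v_1, \ldots, v_m$ are sorted lexicographically with respect to the given total order on $\Omega$. The key observation is that for every $h$, each equivalence class of $\Sigma_h$ occurs as a contiguous block of indices in the sorted order: two vectors $v_i$ and $v_j$ agree on their first $h$ coordinates if and only if every vector lexicographically between them agrees on those coordinates as well. Consequently, at stage $h+1$, splitting a class $T=\{i, i+1, \ldots, i+t\}$ of $\Sigma_h$ according to the $(h+1)$-th coordinate reduces to scanning the sequence $v_{i,h+1}, v_{i+1,h+1}, \ldots, v_{i+t,h+1}$, which is (weakly) ordered, and recording the positions where consecutive entries differ. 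This uses $t$ comparisons per class; summing over all classes at level $h$ yields at most $m-1$ comparisons per stage. Since there are $n$ stages, the total cost is $O(nm)$, as claimed.

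For the unsorted case, sort $v_1, \ldots, v_m$ lexicographically and then apply the above procedure. Sorting $m$ $n$-tuples with a comparison-based sort uses $O(m \log m)$ lexicographic comparisons between tuples; each such comparison involves at most $n$ $\Omega$-comparisons, for a total of $O(nm \log m)$ comparisons. This dominates the subsequent $O(nm)$ cost of computing the $\Sigma_i$'s on the sorted list, and the proposition follows.

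The only subtle point is the contiguity claim for the sorted case; once this is established, the comparison count in both cases is a straightforward bookkeeping exercise, and no further difficulty arises.
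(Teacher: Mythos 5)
Your proof is correct and follows essentially the same route as the paper: in the sorted case, lexicographic order makes each class of $\Sigma_h$ a contiguous block so that only consecutive-entry comparisons are needed (at most $m$ per stage, hence $O(nm)$ total), and the unsorted case is reduced to this by a preliminary lexicographic sort costing $O(nm\log(m))$ comparisons. Your explicit statement and justification of the contiguity of the equivalence classes is a slightly more careful write-up of the observation the paper states as ``if $v_{h,j} \neq v_{h,j+1}$ then $v_{h,j} \neq v_{h,j+2}$, \ldots'', but the argument is the same.
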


\begin{remark}
In the case when $\Omega = \mathbb{Z}_p$ and $p$ is less than $\log(m)$, it may be useful to sort the vectors using 
bucket sort instead of the classical merge sort. The complexity for the sorting step then becomes  $O(nmp)$. 
\end{remark}

\subsection{Preprocessing by iterating over the vectors}
The second algorithm was first formulated in terms of the point trie \cite{Felszeghy}. 
The algorithm iterates over the vectors, assuming that a partial trie exists and inserts a vector into it. 
The number of comparisons that are needed was reported to be bounded by $nmr$, 
where we recall that $r$ denotes the maximal number of edges from a vertex in the tree. 
In Section \ref{section:compare} we will sharpen this bound.
We formulate the Point trie algorithm in terms of the $\Sigma_i$'s to simplify the comparison of the algorithm with the $\Sigma$-algorithm. We omit the bookkeeping of the witness list and the witness matrix since this
was not part of the original Point trie algorithm. 

\vspace{0.5cm}

\noindent \textbf{The Point trie algorithm}

\noindent 
Let $\Omega$ be a set equipped with an equivalence relation and let 
$v_1, \ldots, v_m$ be elements in $\Omega^n$.

At stage $h$, suppose that $T^{(h)}$ is a trie with respect to $v_1, \ldots, v_h$. Let $\Sigma^{(h)}_1, \ldots, \Sigma^{(h)}_n$ be the equivalence classes defining $T^{(h)}$.  To construct the trie $T^{(h+1)}$, check if $v_{h+1,1} = v_{i,1}$ for some 
$i \leq h$ by iterating over the equivalence classes in $\Sigma^{(h)}_1$. 

\begin{itemize}
\item
If it did not, let 
$\Sigma^{(h+1)}_i = \Sigma^{(h)}_i \cup \{\{h+1\}\}$ for $i =1, \ldots, n$ and stop.
\item
If it did, then  $i \in \Sigma^{(h)}_{1,j}$ for some $j$. Let 
$\Sigma^{(h+1)}_{1,k} = \Sigma^{(h)}_{1,k}$ for $k \neq j$ and let 
$\Sigma^{(h+1)}_{1,j} = \Sigma^{(h)}_{1,j} \cup \{h+1\}$. Continue to check if 
$v_{h+1,2} = v_{i,2}$ for some $i \in  \Sigma^{(h)}_{1,j}$, by iterating over the children of 
$\Sigma^{(h)}_{2}$. 
\begin{itemize}
\item
If it did not, let 
$\Sigma^{(h+1)}_i = \Sigma^{(h)}_i \cup \{\{h+1\}\}$ for $i =2, \ldots, n$ and stop.
\item
If it did, then $i \in \Sigma^{(h)}_{2,j}$ for some $j$. Let 
$\Sigma^{(h+1)}_{2,k} = \Sigma^{(h)}_{2,k}$ for $k \neq j$ and let 
$\Sigma^{(h+1)}_{2,j} = \Sigma^{(h)}_{2,j} \cup \{h+1\}$. Continue in the same fashion.
\end{itemize}
\end{itemize}
\qed

\begin{remark} \label{remark:orderpoints}
It was indicated in \cite{Felszeghy} that the assumption of $\Omega$ being equipped with a total order 
makes it possible to create the point trie by iterating over the points 
using $O(nm\log(m))$ comparisons. We do not agree with 
the argument given in \cite{Felszeghy}. It would prove that insertion sort has complexity 
$O(m\log(m))$ (the algorithm is insertion sort when $n = 1$), which is a contradiction. It is not hard to prove that the 
correct bound should read $O(m^2 + nm\log(m))$ for this method. 
However, if we assume that the points are sorted from the beginning, then we can manage in $O(nm)$ operations. 
\end{remark}

\subsection{Comparing the preprocessing algorithms} \label{section:compare} 

We developed the $\Sigma$-algorithm, described in \ref{subsec:sigma}, as an effective way to build the witness matrix and the witness list and we had the 
constructions from Section \ref{sec:conssep} and Section \ref{sec:conssmelim} in mind. 
Later we realized that the $\Sigma_i$'s could be used in Section \ref{sec:consiso}. 
When reading the paper \cite{Felszeghy}, we understood that the 
$\Sigma$-algorithm could also be used to improve the combinatorial computations of standard monomials with respect to the lexicographical order. 
It turned out that it was not obvious that our method 
was to prefer. Indeed, when $nr \ll m$, the Point trie algorithm seemed to have better asymptotic behavior. 
This lead us to make an extensive comparison of the algorithms, 
and it turned out that the two algorithms perform the same 
comparisons! A corollary to this is that both algorithms share the upper bound $O(nm+m\min(m,nr))$. 

\begin{proposition} \label{prop:equalcmp}
The $\Sigma-$algorithm and the Point trie algorithm perform the same comparisons.
\end{proposition}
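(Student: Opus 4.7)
The plan is to index each comparison made by either algorithm by the pair (level $k$, class $C$ of $\Sigma_{k-1}$) to which it belongs, and to show that for every such $(k,C)$ the two algorithms produce exactly the same unordered set of comparisons at coordinate $k$ between indices lying in $C$. This is a well-defined indexing because both algorithms compare $v_{a,k}$ with $v_{b,k}$ only after having certified that $a$ and $b$ agree on coordinates $1,\ldots,k-1$, i.e.\ only when $a,b$ lie in a common class of $\Sigma_{k-1}$. Since summing over $(k,C)$ recovers the full comparison tally of each algorithm, equality at each $(k,C)$ gives the proposition.

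Fix such a class $C$ and label its children in $\Sigma_k$ as $C_1,\ldots,C_r$ so that the minima $i_s:=\min C_s$ satisfy $i_1<i_2<\cdots<i_r$. I will show that both algorithms generate precisely the set
\[
P(C)=\bigl\{(i_s,j):1\le s\le t\le r,\ j\in C_t\setminus\{i_t\}\bigr\}\cup\bigl\{(i_s,i_t):1\le s<t\le r\bigr\}.
\]
For the $\Sigma$-algorithm, stage $k$ begins by processing $T=C$; its minimum is $i_1$ and one compares $v_{i_1,k}$ against every other element of $C$, producing $T_2=C_1$ and leaving $T_1=C_2\cup\cdots\cup C_r$. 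Using that $i_s\in C_s$ and $i_1<\cdots<i_r$, an easy check shows the minimum of any residual set $C_s\cup\cdots\cup C_r$ arising in the recursion is exactly $i_s$. Hence iteration $s$ contributes the pairs $\{(i_s,j):j\in(C_s\cup\cdots\cup C_r)\setminus\{i_s\}\}$, and the union of these over $s=1,\ldots,r$ rearranges to $P(C)$.

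For the Point trie, I will fix the (natural) convention that at every node children are iterated in creation order, which for children of $C$ coincides with increasing order of their minima: the child $C_s$ is created exactly when $v_{i_s}$ is inserted. Let $j_1<\cdots<j_c$ enumerate $C$ in insertion order, and for $p\ge 2$ let $t(p)$ denote the child containing $j_p$. When $v_{j_p}$ has descended to $C$, the children of $C$ present in the trie are precisely $\{C_s:i_s<j_p\}$. A two-case analysis on whether $j_p=i_{t(p)}$ (no match, new child $C_{t(p)}$ created) or $j_p>i_{t(p)}$ (match at $C_{t(p)}$) shows that the insertion of $v_{j_p}$ produces the pairs $(j_p,i_s)$ for $s=1,\ldots,t(p)-1$ in the first case and for $s=1,\ldots,t(p)$ in the second. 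Summing over $p=2,\ldots,c$ again reproduces exactly $P(C)$.

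The main subtlety to be handled is pinning down the iteration order in the Point trie and verifying that the recursive minima $i_1,i_2,\ldots$ produced by the $\Sigma$-algorithm agree with the sorted-by-minimum labeling of the children of $C$; both reduce to the single observation $i_s\in C_s$ together with $i_1<\cdots<i_r$. Once these alignments are in place the identification $P_{\Sigma}(C)=P_{\mathrm{trie}}(C)=P(C)$ is pure bookkeeping, and assembling over all $(k,C)$ completes the proof.
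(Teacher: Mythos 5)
Your proposal is correct and follows essentially the same strategy as the paper's proof: fix a coordinate and a class of $\Sigma_{k-1}$, characterize explicitly which pairs of indices each algorithm compares there, and observe that the two characterizations coincide. Your write-up is somewhat more explicit than the paper's (in particular about the child-iteration order and the choice of representative $i_s=\min C_s$ in the Point trie, conventions the paper leaves implicit), but the underlying argument is the same.
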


\begin{proof}

Consider first the $\Sigma$-algorithm.
Fix an element $v_{ij}$, with $i \in \Sigma_{j-1,k}$. Then 
$v_{ij}$ is compared only with elements $v_{hj}$, with $h \in \Sigma_{j-1,k}$. More precise, if $h<i$,
then $v_{ij}$ is compared with $v_{hj}$ if $v_{h'j} \neq v_{hj}$ for $h' < h$. If $h>i$, then 
$v_{ij}$ is compared with $v_{hj}$ if $v_{h'j} \neq v_{hj}$ for all $h' < i$.

Now consider the Point trie algorithm. Upon insertion of the vector $v_i$ into the partial trie $T^{(i-1)}$, the element $v_{ij}$ is compared with elements $v_{hj}$, with $h \in \Sigma_{j-1,k}^{i-1}$. But $v_{ij}$ is also compared with elements $v_{hj}$ for $h>i$, when $v_h$ is to be inserted into $T^{(h-1)}$. 
Consider the first group. We compare $v_{ij}$ with $v_{hj}$ when 
$v_{h'j} \neq v_{hj}$ for all $h'$ less than $h$. 
In the second group, $v_{ij}$ is compared with $v_{hj}$ 
when $h>i$ and if $v_{hj} \neq v_{h'j}$ for all $h'<i$. 

Hence, the two algorithms perform the same comparisons.

\end{proof}

We can now prove Theorem \ref{thm:cmpalg}.

\begin{proof}[\textbf{Proof of Theorem} \ref{thm:cmpalg}]
By Proposition \ref{prop:sigmaalg}, the $\Sigma$-algorithm uses at most 
$nm + m^2$ operations. By \cite{Felszeghy}, the Point trie algorithm uses at most $nmr$ comparisons. The result now follows from Proposition \ref{prop:equalcmp}. 
\end{proof}


Since it is not possible to know the value $r$ a priori, it is not clear whether we should use the method from 
Proposition \ref{prop:sortalg} or the $\Sigma$-algorithm 
in the case when $\Omega$ is ordered and $n\log(m) < m^2$. 
It actually depends on the configuration of the points.

\end{document}